\documentclass[12pt,reqno]{amsart}
\usepackage{etoolbox}

\makeatletter
\patchcmd\maketitle
{\uppercasenonmath\shorttitle}
{}
{}{}
\patchcmd\maketitle
{\@nx\MakeUppercase{\the\toks@}}
{\the\toks@}
{}
{}{}
\patchcmd\@settitle{\uppercasenonmath\@title}{\Large}{}{}
\patchcmd\@setauthors
{\MakeUppercase{\authors}}
{\authors}
{}{}
\makeatother
\usepackage{amsmath,amssymb,amsthm, amsfonts}
\usepackage{color}
\usepackage{url}
\usepackage{combelow}
\usepackage{enumerate}
\usepackage{palatino}
\usepackage{euler}
\usepackage{mathrsfs}
\usepackage[utf8]{inputenc}
\usepackage[T1]{fontenc}
\newtheorem{theorem}{Theorem}[section]

\newtheorem{corollary}{Corollary}[section]
\newtheorem{proposition}{Proposition}[section]
\newtheorem{lemma}{Lemma}[section]
\newtheorem{remark}{Remark}[section]
\newtheorem{example}{Example}[section]
\numberwithin{equation}{section}

\newcommand\norm[1]{\left\lVert#1\right\rVert}

\newcommand\skal[2]{\left\langle #1,#2\right\rangle}

\usepackage[colorlinks=true]{hyperref}
\hypersetup{urlcolor=blue, citecolor=red , linkcolor= blue}
\usepackage{cite}

\begin{document}
		\title[On the $\rho$-numerical radius of rank-one operators]{On the $\rho$-numerical radius of rank-one operators and parametrized Buzano-type inequalities}
		\keywords{$\rho$-numerical radius, $\rho$-contraction, rank-one operators, Buzano inequality, Cauchy–Schwarz inequality}
		
		\subjclass[2020]{Primary: 47A12; Secondary: 47A30, 47A07, 46C05}
		
		\author[H. Stankovi\'c]{Hranislav Stankovi\'c}
		\address{Faculty of Electronic Engineering, University of Ni\v s, Aleksandra Medvedeva 4, Ni\v s, Serbia
		}
		\email{\url{hranislav.stankovic@elfak.ni.ac.rs}}
		
		
		\begin{abstract}
			Let $\mathcal{H}$ be a complex Hilbert space and $\rho>0$. We establish the explicit formula
			\[
			\omega_\rho (a\otimes b)=\frac{1}{\rho}\|a\| \|b\|+\left|1-\frac{1}{\rho}\right||\langle a,b\rangle|,\qquad a,b\in\mathcal{H},
			\]
			for the $\rho$-numerical radius of rank-one operators, a unified expression interpolating between the operator norm, the numerical radius, and the spectral radius, which correspond to $\rho=1,2$, and $\rho\to\infty$, respectively. As an application, we derive a parametrized family of Buzano-type inequalities for four vectors. We extract from it an explicit closed-form bound and, under a reality condition which is automatically satisfied in real Hilbert spaces, we determine the best bound in the family in closed form. Examples show that the resulting bounds can be strictly sharper than both the Cauchy--Schwarz and the Buzano inequality, and the classical Buzano inequality is recovered as a boundary case, which yields an alternative proof of it.
		\end{abstract}
		
		\maketitle
		
		\bigskip
		
		\section{Introduction}
		
		\bigskip
		
		Let $(\mathcal{H}, \langle \cdot,\cdot\rangle)$ be a complex Hilbert space with the norm $\|\cdot\|$, and denote by $\mathfrak{B}(\mathcal{H})$ the $C^{*}$-algebra of all bounded linear operators on $\mathcal{H}$.
		We also let $\mathbb{D}$ denote the open unit disk in $\mathbb{C}$,
		while $\overline{\mathbb{D}}$ denotes the closure of $\mathbb{D}$, that is
		\[
		\overline{\mathbb{D}} = \{z \in \mathbb{C} \colon |z| \leq 1\}.
		\]
		
		For $T\in\mathfrak{B}(\mathcal{H})$, $\|T\|$ will denote the operator norm of $T$, $\sigma(T)$ will denote the spectrum of $T$, while $r(T)$ will signify the spectral radius of $T$.
		The numerical range of $T\in\mathfrak{B}(\mathcal{H})$ is defined as
		\[
		\mathcal{W}(T) = \{\langle Tx, x \rangle \colon x \in \mathcal{H},\, \|x\| = 1\},
		\]
		while the numerical radius is
		\[
		\omega(T) = \sup_{w \in \mathcal{W}(T)} |w|.
		\]
		
		For a comprehensive study of the numerical radius, we refer the reader to the excellent monographs \cite{GauWu21, GustafsonRao97}.
		\medskip
		
		Following \cite{SzNagyFoias66}, for $\rho>0$,
		an operator \( T \in \mathfrak{B}(\mathcal{H})\) is called a $\rho$-contraction (or $T$ is said to be of class \( \mathcal{C}_\rho \)) if there exists a unitary operator \( U\in\mathfrak{B}(\mathcal{K}) \), for some Hilbert space \( \mathcal{K} \) such that \( \mathcal{K}\supseteq \mathcal{H} \), and
		\[
		T^n h = \rho P_{\mathcal{H}} U^n h, \quad \text{for all } h \in \mathcal{H} \text{ and } n=1,2,\ldots,
		\]
		where $P_{\mathcal{H}}$ is the orthogonal projection from $\mathcal{K}$ onto $\mathcal{H}$.
		The $\rho$-numerical radius of an operator $T\in\mathfrak{B}(\mathcal{H})$ is defined in \cite{Holbrook68} by
		\[
		\omega_\rho(T) = \inf\left\{u>0 : \frac{1}{u}T \text{ is a $\rho$-contraction}\right\}.
		\]
		In other words, $\omega_\rho(\cdot)$ is the Minkowski functional of the class $\mathcal{C}_\rho$. For $T\in\mathfrak{B}(\mathcal{H})$, the family $\{\omega_\rho(T)\}_{\rho>0}$ interpolates between classical operator radii. More precisely,
		\[
		\omega_1(T) = \|T\|, \qquad \omega_2(T)=\omega(T), \quad \text{ and }\quad
		\omega_\infty(T):=\lim_{\rho\to\infty}\omega_\rho(T) = r(T).
		\]
		
		It is well known that $\omega_\rho(\cdot)$ is a self-adjoint and weakly unitarily invariant quasinorm on $\mathfrak{B}(\mathcal{H})$, that the function $\rho\mapsto\omega_\rho(T)$ is non-increasing, and that
		\[
		r(T)\leq\omega_\rho(T)\qquad\text{and}\qquad \frac{1}{\rho}\|T\|\leq \omega_\rho(T)\leq \max\Big\{1,\frac{2}{\rho}-1\Big\}\|T\|,\qquad T\in\mathfrak{B}(\mathcal{H}).
		\]
		Moreover, for certain classes of operators, explicit formulae for $\omega_\rho(\cdot)$ are available. For instance, if $T$ is normal, then
		\[
		\omega_\rho(T) =
		\begin{cases}
			(2\rho^{-1}-1)\|T\|, & 0<\rho<1,\\[0.3em]
			\|T\|, & \rho\geq 1,
		\end{cases}
		\]
		and if $T$ is $2$-nilpotent, then $\omega_\rho(T)=\tfrac{1}{\rho}\|T\|$.  For further details, see \cite{Holbrook68, Holbrook71, AndoNishio73}. A recent survey \cite{WuGau25} is also of great interest regarding this topic.
		
		We also mention that there are other notable generalizations of the numerical radius of an operator. See, for instance, \cite{AbuOmarKittaneh19, ElHaddadHirzallahKittaneh25, GauWu21, KittanehZamani24b, MarcusAndresen77, Rajic05, StankovicKrsticDamnjanovic25}.
		
		\medskip
		
		Finally, if $a, b \in \mathcal{H}$, then we denote by $a \otimes b$ the $\mathfrak{B}(\mathcal{H})$ operator of rank (at most) one defined by
		\[
		(a\otimes b)x = \langle x,a \rangle\, b,\quad x\in\mathcal{H}.
		\]
		For an operator of the form $a \otimes b$, we have $(a\otimes b)^* = b \otimes a$, $\|a\otimes b\| = \|a\|\|b\|$, and $r(a\otimes b)=|\langle a, b\rangle|$. Based on \cite[Theorem 1]{FujiiKubo93}, we have the identity
		\begin{equation}\label{eq:w1}
			\omega(a\otimes b) = \frac{\|a\| \, \|b\|+|\langle a,b\rangle|}{2}.
		\end{equation}
		From \eqref{eq:w1}, it is immediate that the inequality
		\begin{equation}\label{eq:buzano_ineq}
			|\langle a, x\rangle \langle x, b\rangle|\leq \frac{\|a\| \, \|b\|+|\langle a,b\rangle|}{2}\norm{x}^2
		\end{equation}
		holds for all $a, b, x \in\mathcal{H}$. Inequality \eqref{eq:buzano_ineq} is known as the Buzano inequality (see \cite{Buzano74}) and it generalizes the Cauchy--Schwarz inequality. The original proof is somewhat intricate as it requires specific facts about the orthogonal decomposition of a complete inner product space. A simpler proof can be found in \cite{FujiiKubo93}. For various refinements and generalizations of the Buzano inequality in recent years, see \cite{BottazziConde23, DencicStankovicKrstic26, Dragomir85, KittanehZamani24a, StankovicDiss}.
		
		\bigskip
		
		Motivated by these foundational results, the objectives of this paper are the following. First, we establish an explicit formula for the $\rho$-numerical radius $\omega_\rho(a\otimes b)$, valid for all $\rho > 0$, thereby providing a natural generalization of the identity \eqref{eq:w1}. Second, combining this formula with a block-matrix representation of $\omega_\rho(\cdot)$ due to Kittaneh and Zamani \cite{KittanehZamani24}, we derive a family of Buzano-type inequalities involving four vectors and a parameter $\lambda\in[-1,1]$, which reduces to \eqref{eq:buzano_ineq} for $\lambda=\pm1$ and $x=0$. Third, we extract from this family two explicit inequalities: the best possible one under a mild reality condition on the vectors involved, which is automatically satisfied in real Hilbert spaces, and a closed-form one valid without any additional assumption. Examples show that the resulting bounds can be strictly sharper than both the Cauchy--Schwarz and the Buzano inequality.
		
		\bigskip
		\section{Main results}
		\bigskip
		
		Before presenting the proof of our primary formula for the $\rho$-numerical radius of rank-one operators, we require two auxiliary results. The first is a characterization theorem of Ando and Nishio \cite{AndoNishio73}, linking the $\rho$-numerical radius to resolvent-type bounds.
		
		\begin{theorem}\cite{AndoNishio73}\label{thm:alpha_ineq}
			Let \( T \in \mathfrak{B}(\mathcal{H}) \), $\rho>0$ and $\alpha>0$. Then,
			\[
			\omega_\rho(T) \leq \alpha \iff \left\| T \left[(\rho - 1)T - \rho \alpha z I\right]^{-1} \right\| \leq  1 \quad \text{for all } z \notin \overline{\mathbb{D}}.
			\]
		\end{theorem}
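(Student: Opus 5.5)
The plan is to reduce the statement to the classical quadratic-form characterization of the class $\mathcal{C}_\rho$ due to Sz.-Nagy and Foia\c{s} \cite{SzNagyFoias66}. That characterization says $S\in\mathcal{C}_\rho$ if and only if $r(S)\le 1$ and
\[
(\rho-2)|u|^2\|Sh\|^2-2(\rho-1)\,\mathrm{Re}\,\langle uSh,h\rangle+\rho\|h\|^2\ \ge 0\qquad\text{for all } h\in\mathcal{H},\ |u|<1 .
\]
As sanity checks: for $\rho=1$ this reduces to $\|uSh\|\le\|h\|$, i.e.\ $\|S\|\le1$, and for $\rho=2$ it reduces to $\mathrm{Re}\langle uSh,h\rangle\le\|h\|^2$, i.e.\ $\omega(S)\le1$. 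By the definition of $\omega_\rho$ as a Minkowski functional, together with the fact that $\mathcal{C}_\rho$ is closed and balanced (it is weakly closed under limits of the defining moment conditions), we have $\omega_\rho(T)\le\alpha$ if and only if $S:=T/\alpha\in\mathcal{C}_\rho$. So it suffices to treat $\alpha=1$. The case of general $\alpha$ then follows by substituting $T=\alpha S$: indeed $T[(\rho-1)T-\rho\alpha zI]^{-1}=S[(\rho-1)S-\rho zI]^{-1}$.

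With $\alpha=1$, write $z\notin\overline{\mathbb{D}}$ and $u=1/\bar z\in\mathbb{D}$. The key computation is as follows. If $R_z:=(\rho-1)S-\rho zI$ is invertible, the inequality $\|SR_z^{-1}h\|\le\|h\|$ for all $h$ is equivalent, after setting $g=R_z^{-1}h$, to $\|Sg\|^2\le\|(\rho-1)Sg-\rho zg\|^2$ for all $g$. Expanding the right-hand side, cancelling, and dividing by $\rho|z|^2>0$ gives exactly
\[
(\rho-2)|u|^2\|Sg\|^2-2(\rho-1)\mathrm{Re}\langle uSg,g\rangle+\rho\|g\|^2\ge 0 .
\]
So, modulo invertibility, the resolvent bound for all $|z|>1$ is literally the Sz.-Nagy--Foia\c{s} inequality for all $0<|u|<1$. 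The case $u=0$ is trivial.

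What remains, and what I expect to be the main obstacle, is the invertibility of $R_z$ together with the spectral condition $r(S)\le1$. For the direction ``$\Rightarrow$'', $r(S)\le1$ gives $\sigma((\rho-1)S)\subseteq\{|\zeta|\le|\rho-1|\}$. This forces $\rho z\notin\sigma((\rho-1)S)$ whenever $|\rho-1|<\rho$, i.e.\ whenever $\rho>1/2$. For small $\rho$ I would instead use the quadratic inequality itself: it shows $R_z$ is bounded below for every $|z|>1$, uniformly on compact subsets, and the same holds for $R_z^*$ by applying the argument to $S^*\in\mathcal{C}_\rho$, since $\mathcal{C}_\rho$ is self-adjoint. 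Hence $R_z$ is invertible. For the direction ``$\Leftarrow$'', the hypothesis that the resolvent-type operator exists for all $|z|>1$ means $\rho z/(\rho-1)\notin\sigma(S)$ for $|z|>1$ (when $\rho\neq1$). This alone only gives $r(S)\le\rho/|\rho-1|$, so I must additionally show that no point $\lambda$ with $1<|\lambda|$ lies in $\sigma(S)$. To do this I would take an approximate eigenvector $g_n$ for a boundary spectral point $\lambda$ with $|\lambda|>1$, plug it into the quadratic inequality with $u$ chosen so that $u\lambda$ is real and close to $1$, and check that the expression tends to $(\rho-2)|u\lambda|^2-2(\rho-1)u\lambda+\rho=(1-u\lambda)\bigl(\rho-(\rho-2)u\lambda\bigr)$. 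This becomes negative for $u\lambda$ slightly larger than $1$, which gives a contradiction and establishes $r(S)\le1$. The case $\rho=1$ is just $\|S\|\le1$ and is immediate.
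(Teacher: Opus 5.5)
The paper does not prove this statement: it quotes it from Ando--Nishio and uses it as a black box, so there is no internal argument to compare with. Your route is the standard one. You rescale to $\alpha=1$ and use the identity
\[
\|R_zg\|^2-\|Sg\|^2=\rho|z|^2\,Q_u(g),\qquad Q_u(g):=(\rho-2)|u|^2\|Sg\|^2-2(\rho-1)\operatorname{Re}\langle uSg,g\rangle+\rho\|g\|^2,\qquad u=1/z,
\]
to turn the resolvent bound into the Sz.-Nagy--Foia\c{s} quadratic criterion. Your ``$\Leftarrow$'' direction is sound. The invertibility hypothesis converts the norm bound into $Q_u\ge0$ for all $0<|u|<1$, and approximate eigenvectors at a spectral point of maximal modulus correctly recover $r(S)\le1$.

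Two small corrections:
\begin{enumerate}
\item The identity requires $u=1/z$, not $u=1/\bar z$, since the cross term is $\operatorname{Re}(\bar z\langle Sg,g\rangle)/|z|^2=\operatorname{Re}(\langle Sg,g\rangle/z)$. Both maps carry $\{|z|>1\}$ onto $\mathbb{D}\setminus\{0\}$, so nothing else changes.
\item The closedness and balancedness of $\mathcal{C}_\rho$ used in the Minkowski-functional step are best read off from the criterion itself: replacing $S$ by $cS$ amounts to replacing $u$ by $cu$. The appeal to ``moment conditions'' is too vague as written.
\end{enumerate}

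The one step you assert rather than prove is the invertibility of $R_z$ in ``$\Rightarrow$'' for $\rho\le1/2$. The identity only gives $\|R_zg\|\ge\|Sg\|$, which is not a lower bound by $\|g\|$. So the claim that the quadratic inequality shows $R_z$ is bounded below needs a quantitative form of the criterion.

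For $\rho<2$ this follows from concavity. For fixed $g$ and $\phi$, the map $s\mapsto Q_{se^{i\phi}}(g)$ is a quadratic with leading coefficient $(\rho-2)\|Sg\|^2\le0$. It is nonnegative on $[0,1)$, hence on $[0,1]$, and equals $\rho\|g\|^2$ at $s=0$. Therefore $Q_u(g)\ge\rho(1-|u|)\|g\|^2$, which gives $\|R_zg\|^2\ge\rho^2|z|(|z|-1)\|g\|^2$.

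Invertibility then follows without the self-adjointness of $\mathcal{C}_\rho$. The set $\{\rho z:|z|>1\}$ is connected and meets the resolvent set of $(\rho-1)S$ for large $|z|$. It contains no approximate eigenvalue, hence no boundary point of $\sigma((\rho-1)S)$, so it is disjoint from the spectrum.

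Alternatively, your own approximate-eigenvector trick works here too, now with $u\lambda=-t$ and $\rho/(2-\rho)<t<r(S)$. The limit of $Q_u(g_n)$ is $(\rho-2)(t+1)\bigl(t-\frac{\rho}{2-\rho}\bigr)<0$, a contradiction. Hence $r(S)\le\rho/(2-\rho)<\rho/(1-\rho)$ for every $S\in\mathcal{C}_\rho$ with $\rho<1$, which gives invertibility directly. With this step filled in, your proof is complete.
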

		
		The second is a computational lemma giving an explicit formula for the resolvent of a rank-one operator, which will simplify the norm expression arising from Theorem \ref{thm:alpha_ineq}.
		
		\begin{lemma}\label{lem:inverse_f}
			Let $a,b\in\mathcal{H}$, and let $\lambda\notin\{0,\langle b, a\rangle\}$. Then
			\begin{equation*}
				(a\otimes b-\lambda I)^{-1}=\frac{1}{\lambda}\frac{1}{\langle b, a\rangle-\lambda}(a\otimes b)-\frac{1}{\lambda}I.
			\end{equation*}
		\end{lemma}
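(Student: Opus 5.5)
Since $\mathcal{H}$ may be infinite-dimensional, I will show that the proposed operator is a two-sided inverse of $a\otimes b-\lambda I$ by direct multiplication, rather than by solving an equation. Write $T=a\otimes b$ and $\mu=\langle b,a\rangle$. The only algebraic fact needed is $T^2=\mu T$. To verify it, note that $T^2x=\langle x,a\rangle (a\otimes b)b=\langle x,a\rangle\langle b,a\rangle b=\mu Tx$ for every $x\in\mathcal{H}$.

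Set $S=\frac{1}{\lambda(\mu-\lambda)}T-\frac{1}{\lambda}I$, which is well defined because $\lambda\neq 0$ and $\lambda\neq\mu$. Since $S$ is a polynomial in $T$, it commutes with $T-\lambda I$, so it suffices to compute one product:
\[
(T-\lambda I)S=\frac{1}{\lambda(\mu-\lambda)}T^2-\frac{1}{\lambda}T-\frac{1}{\mu-\lambda}T+I.
\]
Substituting $T^2=\mu T$, the coefficient of $T$ becomes
\[
\frac{\mu}{\lambda(\mu-\lambda)}-\frac{1}{\lambda}-\frac{1}{\mu-\lambda}=\frac{\mu-(\mu-\lambda)-\lambda}{\lambda(\mu-\lambda)}=0.
\]
Hence $(T-\lambda I)S=I$, and by commutativity also $S(T-\lambda I)=I$. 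Therefore $T-\lambda I$ is invertible in $\mathfrak{B}(\mathcal{H})$ with inverse $S$, as claimed.

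There is no real obstacle. The points to watch are the orientation of the inner product, since $T^2=\langle b,a\rangle T$ rather than $\langle a,b\rangle T$ with the convention $(a\otimes b)x=\langle x,a\rangle b$, and the observation that the hypotheses $\lambda\notin\{0,\langle b,a\rangle\}$ are exactly what make the coefficients finite.
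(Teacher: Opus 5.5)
Your proof is correct and follows the same route as the paper: both verify $(a\otimes b-\lambda I)S=I$ by direct multiplication, using $(a\otimes b)^2=\langle b,a\rangle(a\otimes b)$. The only difference is that the paper first gets invertibility from the spectrum $\sigma(a\otimes b)$ and then checks a one-sided identity, whereas you get the two-sided inverse from the fact that $S$ is a polynomial in $a\otimes b$, which makes your argument a little more self-contained.
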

		
		\begin{proof}
			Since $\sigma(a\otimes b)=\{0,\langle b, a\rangle\}$ and $\lambda\notin\{0,\langle b, a\rangle\}$, the operator $a\otimes b-\lambda I$ is invertible. Let us denote the proposed inverse by
			\[
			T:=\frac{1}{\lambda}\frac{1}{\langle b,a\rangle-\lambda}\,(a\otimes b)-\frac{1}{\lambda}I.
			\]
			It is sufficient to verify that \((a\otimes b-\lambda I)T=I\). By utilizing the standard rank-one composition identity
			\begin{equation}\label{eq:square}
				(a\otimes b)^2=\langle b,a\rangle(a\otimes b),
			\end{equation}
			we obtain
			\begin{align*}
				(a\otimes b-\lambda I)T
				&= \frac{1}{\lambda}\frac{1}{\langle b,a\rangle-\lambda}(a\otimes b)^2
				-\frac{1}{\lambda}(a\otimes b) -\left(\frac{1}{\langle b,a\rangle-\lambda}\,(a\otimes b)-I\right)\\
				&= \frac{1}{\lambda}\frac{\langle b,a\rangle}{\langle b,a\rangle-\lambda}(a\otimes b)
				-\frac{1}{\lambda}(a\otimes b)-\frac{1}{\langle b,a\rangle-\lambda}\,(a\otimes b)+I \\
				&= \frac{1}{\langle b,a\rangle-\lambda}\,(a\otimes b)-\frac{1}{\langle b,a\rangle-\lambda}\,(a\otimes b)+I = I,
			\end{align*}
			which completes the proof.
		\end{proof}
		
		We can now establish the main formula.
		
		\begin{theorem}\label{thm:main}
			Let $a,b\in\mathcal{H}$ and $\rho>0$. Then
			\begin{equation}\label{eq:main_equality}
				\omega_\rho (a\otimes b)=\frac{1}{\rho}\|a\| \|b\|+\left|1-\frac{1}{\rho}\right||\langle a,b\rangle|.
			\end{equation}
		\end{theorem}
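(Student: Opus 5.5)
The plan is to apply Theorem \ref{thm:alpha_ineq} with $T=a\otimes b$ and to compute the resolvent-type norm explicitly via Lemma \ref{lem:inverse_f}. The degenerate cases are handled first. If $a=0$ or $b=0$, both sides vanish. If $\rho=1$, the formula reads $\|a\|\|b\|=\|a\otimes b\|$, which is known. So assume $a,b\neq0$ and $\rho\neq1$. Write $\mu:=\langle b,a\rangle$, so $|\mu|=|\langle a,b\rangle|$. Denote by $\alpha_0$ the right-hand side of \eqref{eq:main_equality}.

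For $z\notin\overline{\mathbb D}$ and $\alpha>0$ we have $(\rho-1)T-\rho\alpha z I=(\rho-1)(T-\lambda I)$ with $\lambda=\rho\alpha z/(\rho-1)$. Provided $\lambda\neq\mu$, Lemma \ref{lem:inverse_f} gives
\[
T(T-\lambda I)^{-1}=\frac{1}{\lambda}\Big(\frac{\mu}{\mu-\lambda}-1\Big)T=\frac{1}{\mu-\lambda}\,T .
\]
Hence the operator in Theorem \ref{thm:alpha_ineq} equals $\frac{1}{(\rho-1)(\mu-\lambda)}(a\otimes b)$, and its norm is
\[
\frac{\|a\|\|b\|}{|(\rho-1)\mu-\rho\alpha z|}.
\]
The condition therefore becomes $|\rho\alpha z-(\rho-1)\mu|\ge\|a\|\|b\|$ for all $|z|>1$. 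The invertibility requirement $\lambda\ne\mu$ is part of this, since at $\lambda=\mu$ the norm is infinite and the inequality fails.

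Next I minimize over $z$. As $z$ ranges over $|z|>1$, the point $\rho\alpha z$ ranges over the exterior of the closed disk of radius $\rho\alpha$. The infimum of $|w-c|$ over $|w|>\rho\alpha$, with $c=(\rho-1)\mu$, equals $\max\{\rho\alpha-|c|,0\}$ when $|c|\le\rho\alpha$, and equals $0$ otherwise. The infimum is approached but not attained; since the condition is a non-strict inequality on an open set, it is equivalent to the inequality for the infimum. Thus $\omega_\rho(T)\le\alpha$ if and only if $\rho\alpha-|\rho-1||\mu|\ge\|a\|\|b\|$, that is,
\[
\alpha\ge \frac{1}{\rho}\|a\|\|b\|+\Big|1-\frac1\rho\Big||\langle a,b\rangle|=\alpha_0 .
\]
Because $\|a\|\|b\|>0$, the case $|c|>\rho\alpha$ automatically fails, so it needs no separate treatment. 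Since $\omega_\rho(T)\le\alpha\iff\alpha\ge\alpha_0$ for every $\alpha>0$, we conclude $\omega_\rho(T)=\alpha_0$.

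The main obstacle is bookkeeping rather than depth. Two points need care: that Lemma \ref{lem:inverse_f} is applicable (here $\lambda\ne0$ is automatic since $|z|>1$), and that points where $(\rho-1)T-\rho\alpha zI$ fails to be invertible are correctly read as violating the norm condition. The other delicate point is the passage from ``for all $|z|>1$'' to the boundary infimum. I would verify it by taking $z=t\,e^{i\arg\mu}$ with $t\downarrow1$, or an arbitrary unimodular direction if $\mu=0$, to see that the bound $\rho\alpha-|c|$ is sharp.
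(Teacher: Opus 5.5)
Your proof is correct. Its core matches the paper's: Theorem~\ref{thm:alpha_ineq} combined with Lemma~\ref{lem:inverse_f} reduces everything to the scalar condition $|\rho\alpha z-(\rho-1)\langle b,a\rangle|\ge\|a\|\|b\|$ for all $|z|>1$. The difference is in how this condition is exploited.

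\textbf{The paper's route.} It treats only $\rho>1$ with normalized vectors.
\begin{itemize}
\item It proves $\omega_\rho(a\otimes b)\le\alpha_0$ by the reverse triangle inequality.
\item It proves the reverse inequality by applying Theorem~\ref{thm:alpha_ineq} at $\alpha=\omega_\rho(a\otimes b)$ and testing $z=te^{i\theta}$ with $t\to1^+$. To justify Lemma~\ref{lem:inverse_f} there, it needs the a priori bounds $\omega_\rho(T)\ge\frac1\rho\|T\|>0$ and $\omega_\rho(T)\ge r(T)$.
\item It obtains the range $0<\rho<1$ from the Ando--Nishio identity $\rho\,\omega_\rho=(2-\rho)\,\omega_{2-\rho}$, and general vectors from homogeneity.
\end{itemize}

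\textbf{Your route.} You compute the infimum of $|w-c|$ over $|w|>\rho\alpha$ exactly. This gives the equivalence $\omega_\rho(T)\le\alpha\iff\alpha\ge\alpha_0$ for every $\alpha>0$ at once. As a result, both inequalities, both ranges $\rho>1$ and $0<\rho<1$, and non-normalized vectors are handled uniformly. You need no a priori bounds on $\omega_\rho$ and no reflection identity.

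\textbf{The trade-off.} Your argument relies on reading Theorem~\ref{thm:alpha_ineq} so that its condition includes invertibility of $(\rho-1)T-\rho\alpha zI$. You use the contrapositive at values $\alpha<\alpha_0$, where this operator may be singular. You correctly observe that the singular case is exactly where $|\rho\alpha z-(\rho-1)\langle b,a\rangle|=0<\|a\|\|b\|$, so it is absorbed into the failure of the scalar inequality. The paper, by contrast, only invokes the theorem at points where invertibility has been checked separately.
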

		
		\begin{proof}
			If $a=0$ or $b=0$, equality \eqref{eq:main_equality} holds trivially, so we assume throughout that $a$ and $b$ are non-zero. We first treat the case $\rho>1$, beginning with normalized vectors $\|a\|=\|b\|=1$; the general case will follow by homogeneity, and the case $0<\rho\leq1$ by means of an identity of Ando and Nishio. Let us define the quantity
			\[
			\alpha:=\frac{1}{\rho}+\left(1-\frac{1}{\rho}\right)|\langle a,b\rangle|.
			\]
			We aim to show that $\omega_\rho(a\otimes b)\leq \alpha$. By Theorem~\ref{thm:alpha_ineq}, this is equivalent to verifying that
			\[
			\left\|(a\otimes b)\Big[(\rho-1)(a\otimes b)-\rho \alpha z I\Big]^{-1}\right\|\leq 1,
			\qquad \text{for all } z\notin\overline{\mathbb{D}},
			\]
			or, written differently,
			\begin{equation}\label{eq:inverse_intermediate}
				\left\|(a\otimes b)\Big[a\otimes b-\tfrac{\rho\alpha z}{\rho-1}I\Big]^{-1}\right\|\leq \rho-1,
				\qquad \text{for all } z\notin\overline{\mathbb{D}}.
			\end{equation}
			Set $\lambda := \frac{\rho\alpha z}{\rho-1}$ and note that Lemma~\ref{lem:inverse_f} is applicable. Indeed, since $\rho>1$, we have $\alpha\geq\frac{1}{\rho}>0$, so that $\lambda\neq0$ because $|z|>1$. Moreover, if $\lambda=\langle b,a\rangle$, then $z=\frac{(\rho-1)\langle b,a\rangle}{\rho\alpha}$, and since $\rho\alpha=1+(\rho-1)|\langle a,b\rangle|$, this would give
			\[
			|z|=\frac{(\rho-1)|\langle a,b\rangle|}{1+(\rho-1)|\langle a,b\rangle|}<1,
			\]
			which contradicts $z\notin\overline{\mathbb{D}}$. By Lemma~\ref{lem:inverse_f} and \eqref{eq:square}, we have
			\begin{align*}
				(a\otimes b)\Big[a\otimes b-\lambda I\Big]^{-1}
				&=(a\otimes b)\Big[\tfrac{1}{\lambda(\langle b,a\rangle-\lambda)}\,a\otimes b-\tfrac{1}{\lambda}I\Big]\\
				&=\frac{\langle b,a\rangle}{\lambda(\langle b,a\rangle-\lambda)}\,(a\otimes b)-\frac{1}{\lambda}\,(a\otimes b)\\
				&=\frac{\langle b,a\rangle-(\langle b,a\rangle-\lambda)}{\lambda(\langle b,a\rangle-\lambda)}\,(a\otimes b)\\&
				=\frac{1}{\langle b,a\rangle-\lambda}\,(a\otimes b).
			\end{align*}
			Since $\|a\otimes b\|=\|a\|\|b\|=1$, inequality \eqref{eq:inverse_intermediate} therefore reduces to
			\[
			1\leq (\rho-1)|\langle b,a\rangle-\lambda|,\qquad z\notin\overline{\mathbb{D}}.
			\]
			Since $$\lambda=\frac{1}{\rho-1}\big(1+(\rho-1)|\langle a,b\rangle|\big)z,$$ it remains to show that
			\[
			1\leq |(\rho-1)\langle b,a\rangle-(1+(\rho-1)|\langle b,a\rangle|)z|, \qquad z\notin\overline{\mathbb{D}}.
			\]
			Let $w:=(\rho-1)\langle b,a\rangle$. By the reverse triangle inequality, we then have
			\[
			|w-(1+|w|)z|\geq (1+|w|)|z|-|w|=|z|+(|z|-1)|w|\geq |z|>1,
			\]
			which establishes the upper bound $\omega_\rho(a\otimes b)\leq \alpha$.
			
			\medskip
			
			To prove the reverse inequality, we again invoke Theorem~\ref{thm:alpha_ineq}, which guarantees that
			\[
			\left\|(a\otimes b)\Big[(\rho-1)(a\otimes b)-\rho \omega_\rho(a\otimes b)z I\Big]^{-1}\right\|\leq 1,
			\qquad z\notin\overline{\mathbb{D}}.
			\]
			We now set $\lambda:=\frac{\rho\,\omega_\rho(a\otimes b)z}{\rho-1}$ and note that Lemma~\ref{lem:inverse_f} is applicable again. Indeed, since $|z|>1$ and $\omega_\rho(a\otimes b)\geq\frac{1}{\rho}\|a\|\|b\|>0$, we have $\lambda\neq0$. Moreover, as $\frac{\rho}{\rho-1}>1$ and $|z|>1$,
			\[
			|\lambda|=\frac{\rho}{\rho-1}\,\omega_\rho(a\otimes b)\,|z|>\omega_\rho(a\otimes b)\geq r(a\otimes b)=|\langle b,a\rangle|,
			\]
			so that $\lambda\neq\langle b,a\rangle$. Following the exact same algebraic reduction as before, this condition yields
			\begin{equation}\label{eq:wr_geq_1}
				1\leq |(\rho-1)\langle b,a\rangle-\rho \omega_\rho(a\otimes b)z|,
				\qquad z\notin\overline{\mathbb{D}}.
			\end{equation}
			Let $\theta = \arg(\langle b,a\rangle)$ if $\langle b,a\rangle\neq 0$, and $\theta = 0$ otherwise. By choosing $z'=te^{i\theta}$ with $t>1$, we ensure $z'\notin\overline{\mathbb{D}}$. Substituting this specific $z'$ into \eqref{eq:wr_geq_1} gives
			\[
			1\leq \big|(\rho-1)|\langle b,a\rangle|-\rho \omega_\rho(a\otimes b)\, t\big|,\qquad t>1.
			\]
			Since $\omega_\rho(a\otimes b)\geq r(a\otimes b)=|\langle b,a\rangle|$, we have
			\[
			\rho\,\omega_\rho(a\otimes b)\, t\geq \rho\,\omega_\rho(a\otimes b)\geq\rho|\langle b,a\rangle|\geq(\rho-1)|\langle b,a\rangle|,\qquad t>1,
			\]
			so the expression under the modulus is non-positive, and the previous inequality reads
			\[
			1\leq \rho \omega_\rho(a\otimes b)\, t-(\rho-1)|\langle b,a\rangle|,\qquad t>1.
			\]
			Letting $t\to1^+$, we obtain
			\[
			1\leq \rho \omega_\rho(a\otimes b)-(\rho-1)|\langle b,a\rangle|,
			\]
			which simplifies to $$\omega_\rho(a\otimes b)\geq \frac{1}{\rho}\big(1+(\rho-1)|\langle b,a\rangle|\big)=\alpha.$$ Combining this with our upper bound, we establish that
			\begin{equation}\label{eq:formula_norm_1}
				\omega_\rho(a\otimes b)=\frac{1}{\rho}+\left(1-\frac{1}{\rho}\right)|\langle a,b\rangle|,
			\end{equation}
			whenever $\|a\|=\|b\|=1$ (and $\rho>1$).
			
			\medskip
			
			With the normalized case resolved, we can easily evaluate the case of arbitrary non-zero vectors $a,b\in\mathcal{H}$. By the positive homogeneity of the $\rho$-numerical radius and \eqref{eq:formula_norm_1}, we obtain
			\begin{align*}
				\omega_\rho(a\otimes b)
				&=\|a\|\|b\|\;\omega_\rho\!\left(\frac{a}{\|a\|}\otimes \frac{b}{\|b\|}\right)\\
				&=\|a\|\|b\|\left(\frac{1}{\rho}+\Big(1-\frac{1}{\rho}\Big)\frac{|\langle a,b\rangle|}{\|a\|\|b\|}\right)\\
				&=\frac{1}{\rho}\|a\|\|b\|+\left(1-\frac{1}{\rho}\right)|\langle a,b\rangle|,
			\end{align*}
			which extends the validity of the formula to all  vectors.
			
			\medskip
			
			Finally, we address the case where $0<\rho< 1$. As observed by Ando and Nishio \cite{AndoNishio73}, the $\rho$-numerical radius satisfies the identity $$\rho\,\omega_\rho(\cdot)=(2-\rho)\,\omega_{2-\rho}(\cdot)$$ for $0<\rho<2$. Applying this identity to our proven formula (since $2-\rho >1$), we have
			\begin{align*}
				\rho\,\omega_\rho(a\otimes b)
				&=(2-\rho)\left[\frac{1}{2-\rho}\|a\|\|b\|+\Big(1-\frac{1}{2-\rho}\Big)|\langle a,b\rangle|\right]\\
				&=\|a\|\|b\|+(1-\rho)|\langle a,b\rangle|.
			\end{align*}
			Dividing both sides by $\rho$ yields
			\[
			\omega_\rho(a\otimes b)
			=\frac{1}{\rho}\|a\|\|b\|+\Big(\frac{1}{\rho}-1\Big)|\langle a,b\rangle|
			=\frac{1}{\rho}\|a\| \|b\|+\left|1-\frac{1}{\rho}\right||\langle a,b\rangle|.
			\]
			Since the case $\rho=1$ is trivial, we have established the equality for all $\rho>0$.
		\end{proof}

		As a direct consequence of the explicit formula for the $\rho$-numerical radius of rank-one operators, we can derive a parametrized inequality involving four arbitrary vectors.
		
		\begin{lemma}\label{lem:param}
			Let $a, b,x,y\in\mathcal{H}$ and $\lambda\in[-1,1]$. Then,
			\begin{equation}\label{eq:main_cor}
				\left|\langle a, y\rangle\langle  \sqrt{1-\lambda^2}x+\lambda y, b\rangle\right|\leq \frac{\|a\| \|b\|+ |\lambda||\langle a, b\rangle|}{2}\left(\|x\|^2+\|y\|^2\right).
			\end{equation}
		\end{lemma}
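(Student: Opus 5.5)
The plan is to realize the left-hand side of \eqref{eq:main_cor} as a value of the quadratic form of a \emph{rank-one} operator on $\mathcal{H}\oplus\mathcal{H}$, and then apply the identity \eqref{eq:w1} (the case $\rho=2$ of Theorem~\ref{thm:main}) on that larger space. The ambient space is allowed to change, so the inequality should follow from a single choice of two vectors in $\mathcal{H}\oplus\mathcal{H}$. The block-matrix machinery is not needed for this step.

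Set $c:=\sqrt{1-\lambda^2}$ and work in $\mathcal{K}=\mathcal{H}\oplus\mathcal{H}$ with the inner product $\langle (u_1,u_2),(v_1,v_2)\rangle=\langle u_1,v_1\rangle+\langle u_2,v_2\rangle$. Define
\[
A:=(c\,b,\ \lambda b)\in\mathcal{K},\qquad B:=(0,\ a)\in\mathcal{K},
\]
and consider the rank-one operator $A\otimes B$ on $\mathcal{K}$. For $u=(x,y)$ we have $(A\otimes B)u=\langle u,A\rangle B=\big(c\langle x,b\rangle+\lambda\langle y,b\rangle\big)(0,a)$. Hence
\[
\langle (A\otimes B)u,u\rangle=\langle cx+\lambda y,\,b\rangle\,\langle a,y\rangle,
\]
which is exactly the quantity inside the modulus in \eqref{eq:main_cor}. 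By the definition of the numerical radius (applied to $u/\|u\|$ when $u\neq0$; the case $u=0$ is trivial), this gives
\[
\big|\langle a,y\rangle\langle cx+\lambda y,b\rangle\big|\le \omega(A\otimes B)\,\|u\|^2=\omega(A\otimes B)\big(\|x\|^2+\|y\|^2\big).
\]

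It remains to evaluate $\omega(A\otimes B)$ via \eqref{eq:w1} in $\mathcal{K}$. We have $\|A\|^2=(c^2+\lambda^2)\|b\|^2=\|b\|^2$ and $\|B\|=\|a\|$. Moreover $\langle A,B\rangle=\lambda\langle b,a\rangle$, so $|\langle A,B\rangle|=|\lambda||\langle a,b\rangle|$. Therefore
\[
\omega(A\otimes B)=\frac{\|a\|\|b\|+|\lambda||\langle a,b\rangle|}{2},
\]
which yields \eqref{eq:main_cor}.

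The only delicate point is choosing $A$ and $B$ so that the sesquilinear pairings land in the right slots: $\langle a,y\rangle$ rather than its conjugate, and $\langle cx+\lambda y,b\rangle$ rather than $\langle b,cx+\lambda y\rangle$. This comes down to the convention $(a\otimes b)x=\langle x,a\rangle b$. If a slot came out conjugated, I would swap the roles of $A$ and $B$ or take complex conjugates, since moduli are unaffected. The normalization $c^2+\lambda^2=1$ is precisely what keeps $\|A\|=\|b\|$, and this is why the parameter enters only through $|\lambda|$ in front of $|\langle a,b\rangle|$.
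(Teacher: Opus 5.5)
Your proof is correct, and it takes a genuinely different and shorter route than the paper's.

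The paper sets $\rho=1-\lambda\in(0,2]$ and invokes the Kittaneh--Zamani identity $\omega_\rho(a\otimes b)=\frac{2}{\rho}\,\omega(\widetilde{a\otimes b})$ for a $2\times 2$ block operator on $\mathcal{H}\oplus\mathcal{H}$. It evaluates the left side with Theorem~\ref{thm:main}, which it proves via the Ando--Nishio resolvent criterion. It then computes the quadratic form of $\widetilde{a\otimes b}$ at $(x,y)$, rescales to $\|x\|^2+\|y\|^2=1$, and recovers $\lambda=1$ by continuity, because $\rho=0$ is excluded.

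Your key observation is that this block operator is itself rank one: $\widetilde{a\otimes b}=(0,a)\otimes(\sqrt{1-\lambda^2}\,b,\lambda b)$, which is exactly the adjoint of your $A\otimes B$. Hence \eqref{eq:w1}, applied in $\mathcal{K}$, gives its numerical radius at once. Your values $\|A\|=\|b\|$, $\|B\|=\|a\|$, $|\langle A,B\rangle|=|\lambda||\langle a,b\rangle|$ and the quadratic form $\langle a,y\rangle\langle \sqrt{1-\lambda^2}x+\lambda y,b\rangle$ are all correct under the convention $(a\otimes b)x=\langle x,a\rangle b$, so the hedge in your last paragraph is unnecessary.

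Your route bypasses Theorem~\ref{thm:main}, Theorem~\ref{thm:alpha_ineq} and the Kittaneh--Zamani identity entirely. It treats all $\lambda\in[-1,1]$ uniformly, with no limiting argument. It needs no normalization, since $|\langle Tu,u\rangle|\le\omega(T)\|u\|^2$ directly.

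It also shows that only the upper bound in \eqref{eq:w1} is used. So Lemma~\ref{lem:param} is nothing more than the Buzano inequality \eqref{eq:buzano_ineq} in $\mathcal{H}\oplus\mathcal{H}$, applied to $A$, $B$ and $u=(x,y)$. The price is that your route cannot support the paper's remark that Corollary~\ref{cor:buzano} gives an alternative proof of Buzano's inequality, since that would be circular. The paper's proof of Theorem~\ref{thm:main} does not use \eqref{eq:w1}, so its route avoids this.

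Conversely, your observation combined with the Kittaneh--Zamani identity yields Theorem~\ref{thm:main} for $0<\rho\le 2$ directly from \eqref{eq:w1}.
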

		
		\begin{proof}
			Let $\lambda\in[-1,1)$ and put $\rho:=1-\lambda\in(0,2]$. Let $a,b\in\mathcal{H}$. According to \cite[Theorem 3.1]{KittanehZamani24}, we have that
			\begin{equation*}
				\omega_\rho(a\otimes b)=\frac{2}{\rho}\omega(\widetilde{a\otimes b}),
			\end{equation*}
			where
			\begin{equation*}
				\widetilde{a\otimes b}=\begin{bmatrix}
					0&\sqrt{\rho(2-\rho)}(a\otimes b)\\
					0&(1-\rho) (a\otimes b)
				\end{bmatrix}.
			\end{equation*}
			Using \eqref{eq:main_equality}, we have that
			\begin{equation}\label{eq:omega_mat}
				\omega(\widetilde{a\otimes b})=\frac{\rho}{2}\cdot\left(\frac{1}{\rho}\|a\| \|b\|+\left|1-\frac{1}{\rho}\right||\langle a,b\rangle|\right)=\frac{\|a\| \|b\|+ |1-\rho||\langle a, b\rangle|}{2}.
			\end{equation}
			
			Now let $x,y\in\mathcal{H}$ be such that $\|x\|^2+\|y\|^2=1$, and let $\mathbf{x}=\begin{bmatrix} x& y \end{bmatrix}^\top$. Then, $\norm{\mathbf{x}}=1$, and
			\begin{align*}
				\langle \widetilde{a\otimes b}\, \mathbf{x},\mathbf{x}\rangle&=\skal{\begin{bmatrix}
						0&\sqrt{\rho(2-\rho)}(a\otimes b)\\
						0&(1-\rho) (a\otimes b)	\end{bmatrix}\begin{bmatrix}
						x\\y
				\end{bmatrix}}{\begin{bmatrix}
						x\\y
				\end{bmatrix}} \\
				&=\skal{\begin{bmatrix}
						\sqrt{\rho(2-\rho)}(a\otimes b)y\\
						(1-\rho)(a\otimes b)y
				\end{bmatrix}}{\begin{bmatrix}
						x\\y
				\end{bmatrix}}\\
				&=\skal{\begin{bmatrix}
						\sqrt{\rho(2-\rho)}\langle y, a\rangle b\\
						(1-\rho) \langle y, a\rangle b
				\end{bmatrix}}{\begin{bmatrix}
						x\\y
				\end{bmatrix}}\\
				&=\sqrt{\rho(2-\rho)}\langle y, a\rangle\langle b,x\rangle+(1-\rho)\langle y, a\rangle\langle b,y\rangle\\
				&=\langle y, a\rangle\langle b, \sqrt{\rho(2-\rho)}x+(1-\rho) y\rangle.
			\end{align*}
			By the definition of the numerical radius and \eqref{eq:omega_mat}, it follows that
			\begin{align*}
				\left|\langle y, a\rangle\langle b, \sqrt{\rho(2-\rho)}x+(1-\rho) y\rangle\right|&\leq \sup_{\norm{\mathbf{x}}=1}\left|\langle \widetilde{a\otimes b}\, \mathbf{x},\mathbf{x}\rangle\right|=\omega(\widetilde{a\otimes b})\\
				&=\frac{\|a\| \|b\|+ |1-\rho||\langle a, b\rangle|}{2}.
			\end{align*}
			Recalling that $\rho=1-\lambda$, we have
			\[
			\sqrt{\rho(2-\rho)}=\sqrt{2\rho-\rho^2}=\sqrt{1-(1-\rho)^2}=\sqrt{1-\lambda^2}.
			\]
			Hence,
			\begin{equation}\label{eq:norms_square_1}
				\left|\langle a, y\rangle\langle  \sqrt{1-\lambda^2}x+\lambda y, b\rangle\right|\leq \frac{\|a\| \|b\|+ |\lambda||\langle a, b\rangle|}{2}.
			\end{equation}
			Now, let $x,y\in\mathcal{H}$ be not both zero, and put $N:=\|x\|^2+\|y\|^2$. Applying \eqref{eq:norms_square_1} to $x'=\frac{x}{\sqrt N}$ and $y'=\frac{y}{\sqrt N}$, for which $\|x'\|^2+\|y'\|^2=1$, we obtain
			
			\begin{align*}
				\frac{1}{N}\left|\langle a, y\rangle\langle  \sqrt{1-\lambda^2}x+\lambda y, b\rangle\right|
				&=\left|\langle a, y'\rangle\langle  \sqrt{1-\lambda^2}x'+\lambda y', b\rangle\right|\\&
				\leq \frac{\|a\| \|b\|+ |\lambda||\langle a, b\rangle|}{2},
			\end{align*}
			and multiplying by $N$ gives \eqref{eq:main_cor}. Obviously, if $x=y=0$, the inequality holds trivially.
			
			We have thus proved \eqref{eq:main_cor} for every $\lambda\in[-1,1)$. Since both sides of \eqref{eq:main_cor} are continuous functions of $\lambda$ on $[-1,1]$, the case $\lambda=1$ follows by letting $\lambda\to1^-$.
		\end{proof}
		
		In particular, the Buzano inequality \eqref{eq:buzano_ineq} is a boundary case of Lemma~\ref{lem:param}, which thus provides another proof of it.
		
		\begin{corollary}\label{cor:buzano}
			For any $a, b, y \in \mathcal{H}$,
			\[
			|\langle a, y\rangle \langle y, b\rangle| \leq \frac{\|a\|\|b\| + |\langle a,b\rangle|}{2} \|y\|^2.
			\]
		\end{corollary}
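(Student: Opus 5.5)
We obtain the corollary directly from Lemma~\ref{lem:param} by specializing the parameters. Take $\lambda=1$, so that $\sqrt{1-\lambda^2}=0$ and $|\lambda|=1$, and take $x=0$. Then the left-hand side of \eqref{eq:main_cor} becomes
\[
\left|\langle a,y\rangle\langle y,b\rangle\right|,
\]
and the right-hand side becomes
\[
\frac{\|a\|\|b\|+|\langle a,b\rangle|}{2}\,\|y\|^2,
\]
since $\|x\|^2+\|y\|^2=\|y\|^2$. This is exactly the claimed inequality, so nothing further needs to be verified.

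The only subtlety is that the case $\lambda=1$ in Lemma~\ref{lem:param} was itself obtained by a limiting argument. It is not derived from the block-matrix representation, because $\rho=1-\lambda=0$ is not allowed there. If one prefers to avoid this, take $\lambda=-1$ instead, which corresponds to $\rho=2$ and lies within the directly proved range $[-1,1)$. With $\lambda=-1$ and $x=0$ the left-hand side is $|\langle a,y\rangle\langle -y,b\rangle|=|\langle a,y\rangle\langle y,b\rangle|$, and the right-hand side is unchanged because $|\lambda|=1$.

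This second route has a further point in its favour: for $\rho=2$ the Kittaneh--Zamani identity is just the statement $\omega_2=\omega$. Consequently the argument rests only on Theorem~\ref{thm:main} at $\rho=2$, combined with the definition of the numerical radius. No step here poses a real obstacle; the one thing to check is that the chosen value of $\lambda$ is covered by the proof of Lemma~\ref{lem:param}, and $\lambda=-1$ is.
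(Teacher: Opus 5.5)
Your proposal is correct and matches the paper's proof: take $\lambda=1$ and $x=0$ in Lemma~\ref{lem:param}, and the paper also notes that $\lambda=-1$ gives the same inequality. Your extra remark is accurate as well: $\lambda=-1$ (that is, $\rho=2$) avoids the limiting step, and it reduces the argument to Theorem~\ref{thm:main} with $\omega_2=\omega$.
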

		
		\begin{proof}
			Take $\lambda=1$ and $x=0$ in \eqref{eq:main_cor}. (The choice $\lambda=-1$ leads to the same inequality.)
		\end{proof}
		
		Lemma~\ref{lem:param} provides, for every $\lambda\in[-1,1]$ such that $\langle \sqrt{1-\lambda^2}x+\lambda y, b\rangle\neq0$, the estimate
		\begin{equation}\label{eq:family_bound}
			|\langle a, y\rangle|\leq \frac{\|x\|^2+\|y\|^2}{2}\cdot\frac{\|a\| \|b\|+ |\lambda||\langle a, b\rangle|}{\left|\langle  \sqrt{1-\lambda^2}x+\lambda y, b\rangle\right|},
		\end{equation}
		and it is natural to ask for the best of these estimates, that is, for the minimum of the right-hand side of \eqref{eq:family_bound} over all admissible $\lambda$. Whenever the number $\langle x,b\rangle\langle b,y\rangle$ is real, which is automatically the case in real Hilbert spaces (see Remark~\ref{rem:real}), this minimum can be computed explicitly.
		
		\begin{theorem}\label{thm:optimal}
			Let $a,b,x,y\in\mathcal{H}$ be such that $\langle x,b\rangle\langle b,y\rangle\in\mathbb{R}$ and $(\langle x,b\rangle,\langle y,b\rangle)\neq(0,0)$. If $|\langle a,b\rangle||\langle x,b\rangle|>\|a\|\|b\||\langle y,b\rangle|$, put
			\[
			\mu=\frac{\|a\|\|b\|}{|\langle x,b\rangle|},
			\]
			and if $|\langle a,b\rangle||\langle x,b\rangle|\leq\|a\|\|b\||\langle y,b\rangle|$, put
			\[
			\mu=\frac{|\langle a,b\rangle||\langle y,b\rangle|+\sqrt{\big(\|a\|^2\|b\|^2-|\langle a,b\rangle|^2\big)|\langle x,b\rangle|^2+\|a\|^2\|b\|^2|\langle y,b\rangle|^2}}{|\langle x,b\rangle|^2+|\langle y,b\rangle|^2}.
			\]
			Then
			\begin{equation}\label{eq:optimal_bound}
				|\langle a,y\rangle|\leq\frac{\|x\|^2+\|y\|^2}{2}\,\mu .
			\end{equation}
			Moreover, $\mu$ is the minimum of the quotient
			\begin{equation}\label{eq:quotient}
				\frac{\|a\| \|b\|+ |\lambda||\langle a, b\rangle|}{\left|\langle  \sqrt{1-\lambda^2}x+\lambda y, b\rangle\right|}
			\end{equation}
			over all $\lambda\in[-1,1]$ for which the denominator is non-zero. In other words, \eqref{eq:optimal_bound} is the best inequality that can be obtained from \eqref{eq:family_bound}.
		\end{theorem}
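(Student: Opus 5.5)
The plan is to reduce the minimization of \eqref{eq:quotient} to a one-variable trigonometric problem. Then \eqref{eq:optimal_bound} follows by applying \eqref{eq:family_bound} at a minimizing $\lambda$. Write $A:=\|a\|\|b\|$, $C:=|\langle a,b\rangle|$, $p:=\langle x,b\rangle$, $q:=\langle y,b\rangle$, $P:=|p|$, $Q:=|q|$.

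\emph{Step 1: simplify the denominator.} The hypothesis $\langle x,b\rangle\langle b,y\rangle=p\bar q\in\mathbb{R}$ gives $p\bar q=\varepsilon PQ$ with $\varepsilon\in\{\pm1\}$. Expanding the modulus,
\[
\bigl|\sqrt{1-\lambda^2}\,p+\lambda q\bigr|^2=(1-\lambda^2)P^2+\lambda^2Q^2+2\varepsilon\lambda\sqrt{1-\lambda^2}\,PQ ,
\]
so the denominator equals $\bigl|\sqrt{1-\lambda^2}\,P+\varepsilon\lambda Q\bigr|$. The numerator depends only on $|\lambda|$. Hence replacing $\lambda$ by $\varepsilon|\lambda|$ never increases the quotient, and the infimum over admissible $\lambda$ equals the infimum over $\lambda=\sin t$, $t\in[0,\pi/2]$, of
\[
f(t)=\frac{A+C\sin t}{P\cos t+Q\sin t}.
\]
Here the denominator vanishes for at most one endpoint: $t=0$ when $P=0$, or $t=\pi/2$ when $Q=0$. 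Not both, since $(P,Q)\neq(0,0)$.

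\emph{Step 2: locate the minimum.} Differentiating, the numerator of $f'(t)$ is
\[
g(t)=CP+AP\sin t-AQ\cos t ,
\]
which is nondecreasing on $[0,\pi/2]$ and satisfies $g(0)=CP-AQ$.
\begin{itemize}
\item If $CP>AQ$, then $g>0$ throughout, so $f$ is increasing and the minimum is $f(0)=A/P$. This is the first formula for $\mu$; note $P>0$ in this case.
\item If $CP\le AQ$, then $Q>0$. Also $g(\pi/2)=CP+AP\ge0$, so $g$ has a zero $t_0$ at which $f$ attains its minimum. The degenerate endpoints, where $f\to+\infty$, are excluded automatically.
\end{itemize}

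\emph{Step 3: compute the minimal value $\mu=f(t_0)$ in the second case.} I would characterize $\mu$ as the least $m>0$ such that
\[
h_m(t):=A+(C-mQ)\sin t-mP\cos t\ge0\quad\text{on }[0,\pi/2].
\]
At the interior minimum both $h_\mu(t_0)=0$ and $h_\mu'(t_0)=0$ hold. This forces $A=\sqrt{(C-\mu Q)^2+\mu^2P^2}$, that is,
\[
\mu^2(P^2+Q^2)-2CQ\mu+C^2-A^2=0 .
\]
Its roots are $\mu_\pm=\bigl(CQ\pm\sqrt{(A^2-C^2)P^2+A^2Q^2}\bigr)/(P^2+Q^2)$, where $A^2-C^2\ge0$ by Cauchy--Schwarz. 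The product of the roots is $(C^2-A^2)/(P^2+Q^2)\le0$, so $\mu_-\le0<\mu_+$ whenever the radicand is positive. Since $\mu>0$, we get $\mu=\mu_+$, which is the stated formula.

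The main obstacle I expect is the boundary and degenerate bookkeeping:
\begin{itemize}
\item $P=0$, where $t_0$ could sit at an endpoint;
\item $C=A$, where $\mu_-=0$ and one must still argue the relevant root is $\mu_+$;
\item confirming that the tangency point of $h_\mu$ indeed lies in $[0,\pi/2]$ in the second case. I would check this via the sign of $g$: when $CP\le AQ$, the phase angle at which $h_\mu$ attains its minimum is nonnegative.
\end{itemize}

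\emph{Step 4: conclude.} Let $\lambda^*=\varepsilon\sin t_0$ (with $t_0=0$ in the first case). The denominator of \eqref{eq:quotient} at $\lambda^*$ is nonzero, so \eqref{eq:family_bound} applies with quotient value $\mu$, which gives \eqref{eq:optimal_bound}. Minimality over all admissible $\lambda$ follows from Steps 1--3.
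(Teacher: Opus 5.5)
Your argument is correct. Its first step, reducing via $\varepsilon$ and the triangle inequality to $s=|\lambda|=\sin t\in[0,1]$, is the same as in the paper; the minimization is done differently.

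\textbf{How the paper minimizes.} The paper never differentiates. In the case $CP\le AQ$ (in your notation) it takes the stated $\mu$ as given and verifies $\mu Q\ge C$ and $(\mu P)^2+(\mu Q-C)^2=A^2$. It then gets $A+Cs\ge\mu\bigl(\sqrt{1-s^2}\,P+sQ\bigr)$ for all $s\in[0,1]$ at once, from the Cauchy--Schwarz inequality in $\mathbb{R}^2$ applied to $(\mu P,\mu Q-C)$ and $(\sqrt{1-s^2},s)$. The equality point is explicit: $s_0=(\mu Q-C)/A$.

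\textbf{How you minimize.} You locate the minimizer through the monotonicity of $g$, using $g'(t)=A(P\cos t+Q\sin t)\ge0$. Your tangency conditions $h_\mu(t_0)=h_\mu'(t_0)=0$ then reproduce exactly the paper's identity $(C-\mu Q)^2+\mu^2P^2=A^2$, which you solve for $\mu$.

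\textbf{What each route buys.} Your route explains where the formula and the case split come from: the split is just the sign of $g(0)=CP-AQ$, i.e.\ of $f'(0)$. It also makes the separate check $\mu Q\ge C$ unnecessary. The paper's route is a pure verification and avoids calculus and the domain bookkeeping for $f$. It also produces the minimizer in closed form, which is used later in Example~\ref{ex:active}. Your tangency equations give the same point, $\sin t_0=(\mu Q-C)/A$ and $\cos t_0=\mu P/A$.

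\textbf{Two small repairs.}
\begin{itemize}
\item If $a=0$, then $A=C=0$ and $\mu=0$, so selecting the root via $\mu>0$ breaks down. Set this trivial case aside at the start, as the paper does.
\item $CP\le AQ$ does not force $Q>0$: this fails when $C=Q=0$, that is, $a\perp b$ and $y\perp b$. Nothing breaks there, since $g(t)=AP\sin t$ vanishes at $t_0=0$ with $P>0$, but the sentence should be corrected.
\end{itemize}

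\textbf{Your listed obstacles are already settled.} At any zero $t_0$ of $g$ with $P\cos t_0+Q\sin t_0>0$ one has $h_\mu'(t_0)=g(t_0)/(P\cos t_0+Q\sin t_0)=0$, whether or not $t_0$ is an endpoint. So the tangency identity holds at your intermediate-value zero, and the tangency point is $t_0\in[0,\pi/2]$ by construction. The case $P=0$ (where $t_0=\pi/2$) needs no separate treatment. When $C=A$ you still get $\mu=\mu_+$, because $\mu$ is a positive root while the product of the roots is $\le0$.
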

		
		\begin{proof}
			Throughout the proof we write $\alpha=\|a\|\|b\|$, $\beta=|\langle a,b\rangle|$, $X=|\langle x,b\rangle|$ and $Y=|\langle y,b\rangle|$, so that
			\[
			\mu=\begin{cases}
				\dfrac{\alpha}{X}, & \text{if } \beta X>\alpha Y,\\[1.2em]
				\dfrac{\beta Y+\sqrt{(\alpha^2-\beta^2)X^2+\alpha^2Y^2}}{X^2+Y^2}, & \text{if } \beta X\leq\alpha Y.
			\end{cases}
			\]
			Since $(\langle x,b\rangle,\langle y,b\rangle)\neq(0,0)$, we have $b\neq0$. If $a=0$, then $\alpha=\beta=0$, hence $\mu=0$, and all the assertions hold trivially. We therefore assume that $a\neq0$, so that $\alpha>0$, and we recall that $\beta\leq\alpha$ by the Cauchy--Schwarz inequality.
			
			We first express the denominator in \eqref{eq:quotient} through $X$ and $Y$. Since $\overline{\langle y,b\rangle}=\langle b,y\rangle$, the number $\langle x,b\rangle\overline{\langle y,b\rangle}=\langle x,b\rangle\langle b,y\rangle$ is real by hypothesis, and its modulus equals $XY$. Hence, there exists $\varepsilon\in\{-1,1\}$ such that $\langle x,b\rangle\overline{\langle y,b\rangle}=\varepsilon XY$. For $\lambda\in[-1,1]$ we therefore have
			\begin{align*}
				\left|\langle \sqrt{1-\lambda^2}x+\lambda y, b\rangle\right|^2
				&=\left|\sqrt{1-\lambda^2}\,\langle x,b\rangle+\lambda\langle y,b\rangle\right|^2\\
				&=(1-\lambda^2)|\langle x,b\rangle|^2+\lambda^2|\langle y,b\rangle|^2\\
				&\quad+2\lambda\sqrt{1-\lambda^2}\,\operatorname{Re}\big(\langle x,b\rangle\overline{\langle y,b\rangle}\big)\\
				&=(1-\lambda^2)X^2+\lambda^2Y^2+2\varepsilon\lambda\sqrt{1-\lambda^2}\,XY\\
				&=\left(\sqrt{1-\lambda^2}\,X+\varepsilon\lambda Y\right)^2,
			\end{align*}
			and consequently
			\begin{equation}\label{eq:denominator}
				\left|\langle \sqrt{1-\lambda^2}x+\lambda y, b\rangle\right|=\left|\sqrt{1-\lambda^2}\,X+\varepsilon\lambda Y\right|,\qquad\lambda\in[-1,1].
			\end{equation}
			In particular, by the triangle inequality,
			\begin{equation}\label{eq:denominator_bound}
				\left|\langle \sqrt{1-\lambda^2}x+\lambda y, b\rangle\right|\leq\sqrt{1-\lambda^2}\,X+|\lambda|Y,\qquad\lambda\in[-1,1].
			\end{equation}
			
			Next we show that
			\begin{equation}\label{eq:key_ineq}
				\alpha+\beta s\geq\mu\left(\sqrt{1-s^2}\,X+sY\right),\qquad s\in[0,1],
			\end{equation}
			and that equality holds in \eqref{eq:key_ineq} for some $s_0\in[0,1]$ with $\sqrt{1-s_0^2}\,X+s_0Y>0$.
			
			Assume first that $\beta X>\alpha Y$. Then $X>0$ and $\mu=\alpha/X$, so that $\mu Y-\beta=(\alpha Y-\beta X)/X<0$. Hence, for $s\in[0,1]$,
			\[
			\mu\left(\sqrt{1-s^2}\,X+sY\right)=\alpha\sqrt{1-s^2}+\mu Ys\leq\alpha\sqrt{1-s^2}+\beta s\leq\alpha+\beta s,
			\]
			which is \eqref{eq:key_ineq}. For $s_0=0$ both inequalities above become equalities, and $\sqrt{1-s_0^2}\,X+s_0Y=X$ is positive.
			
			Assume now that $\beta X\leq\alpha Y$, and put
			\[
			q=\sqrt{(\alpha^2-\beta^2)X^2+\alpha^2Y^2},
			\]
			which is well defined since $\beta\leq\alpha$. Then
			\[
			\mu=\frac{\beta Y+q}{X^2+Y^2}.
			\]
			We claim that $\mu Y\geq\beta$. Multiplying by $X^2+Y^2>0$ and using $\mu(X^2+Y^2)=\beta Y+q$, we see that this claim is equivalent to
			\[
			\beta Y^2+qY\geq\beta X^2+\beta Y^2,
			\]
			that is, to
			\[
			qY\geq\beta X^2.
			\]
			Both sides here are non-negative, so squaring gives the equivalent inequality
			\[
			Y^2\left((\alpha^2-\beta^2)X^2+\alpha^2Y^2\right)\geq\beta^2X^4,
			\]
			that is,
			\[
			\alpha^2Y^2(X^2+Y^2)\geq\beta^2X^2(X^2+Y^2),
			\]
			which holds because $\alpha Y\geq\beta X$. Furthermore, a direct computation gives
			\begin{align*}
				\mu^2X^2+(\mu Y-\beta)^2
				&=\mu^2(X^2+Y^2)-2\beta Y\mu+\beta^2\\
				&=\frac{(\beta Y+q)^2-2\beta Y(\beta Y+q)+\beta^2(X^2+Y^2)}{X^2+Y^2}\\
				&=\frac{q^2-\beta^2Y^2+\beta^2(X^2+Y^2)}{X^2+Y^2}\\
				&=\frac{(\alpha^2-\beta^2)X^2+\alpha^2Y^2+\beta^2X^2}{X^2+Y^2}=\alpha^2.
			\end{align*}
			Thus $(\mu X,\mu Y-\beta)$ is a vector in $\mathbb{R}^2$ with non-negative coordinates and with Euclidean norm $\alpha$. Applying the Cauchy--Schwarz inequality in $\mathbb{R}^2$ to this vector and to the unit vector $(\sqrt{1-s^2},s)$, we obtain
			\[
			\mu\sqrt{1-s^2}\,X+(\mu Y-\beta)s\leq\alpha,\qquad s\in[0,1],
			\]
			which is \eqref{eq:key_ineq}. Equality holds when $(\sqrt{1-s^2},s)=(\mu X,\mu Y-\beta)/\alpha$, that is, for $s_0=(\mu Y-\beta)/\alpha$, which belongs to $[0,1]$ because $0\leq\mu Y-\beta\leq\alpha$. For this value we have $\sqrt{1-s_0^2}=\mu X/\alpha$ and
			\[
			\sqrt{1-s_0^2}\,X+s_0Y=\frac{\mu X^2+(\mu Y-\beta)Y}{\alpha}=\frac{\mu(X^2+Y^2)-\beta Y}{\alpha}=\frac{q}{\alpha}>0 .
			\]
			Indeed, if $q=0$, then $\alpha^2Y^2=0$ and $(\alpha^2-\beta^2)X^2=0$. Since $\alpha>0$, the first relation gives $Y=0$, and then $X>0$ because $(\langle x,b\rangle,\langle y,b\rangle)\neq(0,0)$. The second relation now gives $\beta=\alpha>0$, so the assumption $\beta X\leq\alpha Y=0$ yields $X=0$, a contradiction.
			
			In both cases we have thus established \eqref{eq:key_ineq}, together with a point $s_0\in[0,1]$ at which equality holds in \eqref{eq:key_ineq} and $\sqrt{1-s_0^2}\,X+s_0Y>0$.
			
			We can now conclude the proof. Put $\lambda_0=\varepsilon s_0\in[-1,1]$. Then $|\lambda_0|=s_0$ and $\varepsilon\lambda_0=\varepsilon^2s_0=s_0$, so \eqref{eq:denominator} with $\lambda=\lambda_0$ gives
			\[
			\left|\langle \sqrt{1-\lambda_0^2}x+\lambda_0 y, b\rangle\right|=\sqrt{1-s_0^2}\,X+s_0Y>0 .
			\]
			In particular, $\lambda_0$ is admissible in \eqref{eq:quotient}. Moreover, the equality case of \eqref{eq:key_ineq} reads
			\begin{equation}\label{eq:equality_s0}
				\alpha+\beta s_0=\mu\left(\sqrt{1-s_0^2}\,X+s_0Y\right).
			\end{equation}
			
			We first prove \eqref{eq:optimal_bound}. Applying \eqref{eq:main_cor} with $\lambda=\lambda_0$ and using \eqref{eq:equality_s0}, we get
			\begin{align*}
				|\langle a,y\rangle|\left(\sqrt{1-s_0^2}\,X+s_0Y\right)
				&\leq\frac{\alpha+\beta s_0}{2}\left(\|x\|^2+\|y\|^2\right)\\
				&=\frac{\|x\|^2+\|y\|^2}{2}\,\mu\left(\sqrt{1-s_0^2}\,X+s_0Y\right),
			\end{align*}
			and dividing by the positive number $\sqrt{1-s_0^2}\,X+s_0Y$ yields \eqref{eq:optimal_bound}.
			
			Next, the quotient \eqref{eq:quotient} at $\lambda=\lambda_0$ equals
			\[
			\frac{\alpha+|\lambda_0|\beta}{\left|\langle \sqrt{1-\lambda_0^2}x+\lambda_0 y, b\rangle\right|}=\frac{\alpha+\beta s_0}{\sqrt{1-s_0^2}\,X+s_0Y}=\mu ,
			\]
			again by \eqref{eq:equality_s0}. Thus the value $\mu$ is attained.
			
			Finally, let $\lambda\in[-1,1]$ be such that $\langle \sqrt{1-\lambda^2}x+\lambda y, b\rangle\neq0$, and put $s=|\lambda|$. By \eqref{eq:denominator_bound},
			\[
			0<\left|\langle \sqrt{1-\lambda^2}x+\lambda y, b\rangle\right|\leq\sqrt{1-s^2}\,X+sY,
			\]
			while \eqref{eq:key_ineq} gives $\alpha+\beta s\geq\mu\left(\sqrt{1-s^2}\,X+sY\right)$. Consequently,
			\[
			\frac{\alpha+|\lambda|\beta}{\left|\langle  \sqrt{1-\lambda^2}x+\lambda y, b\rangle\right|}\geq\frac{\alpha+\beta s}{\sqrt{1-s^2}\,X+sY}\geq\mu .
			\]
			Together with the previous step, this shows that $\mu$ is the minimum of the quotient \eqref{eq:quotient}, and the proof is complete.
		\end{proof}
		
		\begin{remark}\label{rem:real}
			Theorem~\ref{thm:optimal} applies, in particular, to arbitrary vectors of a real Hilbert space. Indeed, if $\mathcal{H}$ is a real Hilbert space, we may regard it as a subset of its complexification $\mathcal{H}\oplus i\mathcal{H}$, whose inner product restricted to $\mathcal{H}$ coincides with the given one. Applying \eqref{eq:main_cor} and \eqref{eq:optimal_bound} in the complexification to vectors $a,b,x,y\in\mathcal{H}$, we obtain the same inequalities in $\mathcal{H}$, and the number $\langle x,b\rangle\langle b,y\rangle$ is then automatically real. The same argument applies to Corollary~\ref{cor:projection_bound} below. Finally, in a complex Hilbert space the hypothesis $\langle x,b\rangle\langle b,y\rangle\in\mathbb{R}$ of Theorem~\ref{thm:optimal} is satisfied, for instance, whenever $\langle x,b\rangle$ and $\langle y,b\rangle$ are both real, or when one of them vanishes.
		\end{remark}
		
		When $\langle x,b\rangle\langle b,y\rangle$ is not real, the minimization in Theorem~\ref{thm:optimal} does not appear to admit such a simple closed form. We therefore turn to a natural explicit choice of the parameter in \eqref{eq:family_bound}. Geometrically, Lemma~\ref{lem:param} provides a family of upper bounds for the mixed product $\left|\langle a, y\rangle\langle  z, b\rangle\right|$, where the vector $z = \sqrt{1-\lambda^2}\,x+\lambda y$ traces an elliptical arc in the subspace spanned by $x$ and $y$, and we choose $\lambda$ so as to maximize the second factor $|\langle z, b\rangle|$ along this arc. Since the right-hand side of \eqref{eq:main_cor} also depends on $\lambda$, this choice need not be optimal in the sense of Theorem~\ref{thm:optimal} (see Example~\ref{ex:active}), but it leads to a closed-form bound without any reality assumption. The maximization rests on the following elementary fact.
		
		\begin{lemma}\label{lem:harmonic_max}
			Let $A, B \in \mathbb{R}$ and consider the function $g(s) = A\cos s + B \sin s$ for $s \in \mathbb{R}$. Then
			\[
			\max_{s\in\mathbb{R}} g(s) = \sqrt{A^2 + B^2},
			\]
			and, when $(A,B) \neq (0,0)$, the maximum is attained precisely at those $s_0$ with $\cos s_0 = A/\sqrt{A^2+B^2}$ and $\sin s_0 = B/\sqrt{A^2+B^2}$.
		\end{lemma}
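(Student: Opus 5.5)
The plan is to view $g$ as a Euclidean inner product in $\mathbb{R}^2$ and apply the Cauchy--Schwarz inequality there, which gives both the upper bound and the equality case at once. The degenerate case $(A,B)=(0,0)$ is immediate, since then $g\equiv 0$ and $\max g=0=\sqrt{A^2+B^2}$. So we assume $(A,B)\neq(0,0)$ and set $R:=\sqrt{A^2+B^2}>0$.

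\emph{Upper bound.} For every $s\in\mathbb{R}$ we write $g(s)=\langle (A,B),(\cos s,\sin s)\rangle_{\mathbb{R}^2}$. Since $(\cos s,\sin s)$ is a unit vector, the Cauchy--Schwarz inequality in $\mathbb{R}^2$ gives $g(s)\leq R$.

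\emph{Attainment.} The point $(A/R,B/R)$ lies on the unit circle, so there is $s_0$ with $\cos s_0=A/R$ and $\sin s_0=B/R$. At this point
\[
g(s_0)=\frac{A^2+B^2}{R}=R.
\]
Hence the maximum is attained and equals $R$.

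\emph{Characterization of maximizers.} Suppose $g(s)=R$. Equality in Cauchy--Schwarz forces $(\cos s,\sin s)$ to be a nonnegative multiple of $(A,B)$, and since it is a unit vector it must equal $(A,B)/R$. To avoid quoting the equality case, we can instead expand
\[
\bigl\|(\cos s,\sin s)-(A,B)/R\bigr\|^2=2-\frac{2g(s)}{R},
\]
which vanishes exactly when $g(s)=R$. This identity gives the upper bound and the characterization together.

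The only step needing any care is this characterization of the maximizers, and the squared-distance identity above settles it.
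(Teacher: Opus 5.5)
Your proof is correct, and it follows a somewhat different route from the paper. The paper writes $(A,B)=\sqrt{A^2+B^2}\,(\cos\varphi,\sin\varphi)$ and uses the cosine addition formula to put $g$ in amplitude--phase form, $g(s)=\sqrt{A^2+B^2}\,\cos(s-\varphi)$. It then reads off the maximum and the maximizer set $s\equiv\varphi \pmod{2\pi}$ from the behaviour of $\cos$. You instead treat $g(s)$ as the inner product of $(A,B)$ with the unit vector $(\cos s,\sin s)$. Your identity $\|(\cos s,\sin s)-(A,B)/R\|^2=2-2g(s)/R$ then gives the bound $g\le R$ and the exact ``if and only if'' description of the maximizers in one line, using nothing beyond $\cos^2 s+\sin^2 s=1$. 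For attainment, both arguments use the same fact: every point of the unit circle has the form $(\cos s,\sin s)$. Your version is more self-contained, and it extends verbatim to maximizing $\langle v,u\rangle$ over unit vectors $u$ in any real inner product space. The paper's version describes $g$ completely as a shifted cosine of amplitude $\sqrt{A^2+B^2}$. That fits how the lemma is used in Corollary~\ref{cor:projection_bound} and the subsequent remark, where the ``amplitude of the oscillatory part'' is the quantity that matters.
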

		\begin{proof}
			If $(A,B)=(0,0)$ the statement is trivial. Otherwise write $(A,B) = \sqrt{A^2+B^2}\,(\cos\varphi, \sin\varphi)$ for a unique $\varphi \in (-\pi,\pi]$. Then
			\[
			g(s) = \sqrt{A^2+B^2}\,\bigl(\cos\varphi \cos s + \sin\varphi \sin s\bigr) = \sqrt{A^2+B^2}\,\cos(s - \varphi),
			\]
			by the cosine addition formula. Since $\cos(\cdot)$ has maximum value $1$, attained when its argument is an integer multiple of $2\pi$, we get $\max_s g(s) = \sqrt{A^2+B^2}$, attained precisely when $s \equiv \varphi \pmod{2\pi}$. For such $s_0$ one has $\cos s_0 = \cos\varphi = A/\sqrt{A^2+B^2}$ and $\sin s_0 = \sin\varphi = B/\sqrt{A^2+B^2}$.
		\end{proof}
		
		\begin{corollary}\label{cor:projection_bound}
			Let $a, b, x, y \in \mathcal{H}$ and assume that the quantity
			\[
			\Delta = \sqrt{\left(|\langle x,b\rangle|^2+|\langle y,b\rangle|^2\right)^2 - 4\left(\operatorname{Im}(\langle x,b\rangle\langle b,y\rangle)\right)^2}
			\]
			is strictly positive. Then
			\begin{equation}\label{eq:projection_bound}
				|\langle a, y\rangle| \leq \frac{\left(\|x\|^2+\|y\|^2\right) \Big( \|a\|\|b\| + \delta |\langle a,b\rangle| \Big)}{\sqrt{2\left(|\langle x,b\rangle|^2+|\langle y,b\rangle|^2+\Delta\right)}},
			\end{equation}
			where the parameter $\delta$ is defined by
			\[
			\delta = \sqrt{\frac{\Delta - |\langle x,b\rangle|^2 + |\langle y,b\rangle|^2}{2\Delta}}.
			\]
		\end{corollary}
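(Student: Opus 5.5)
The plan is to make the natural choice of $\lambda$ in \eqref{eq:family_bound}: the one that maximizes the denominator $|\langle \sqrt{1-\lambda^2}x+\lambda y,b\rangle|$. We then check that at this $\lambda$ the denominator squared equals $\frac{1}{2}(|\langle x,b\rangle|^2+|\langle y,b\rangle|^2+\Delta)$ and that $|\lambda|=\delta$.

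Write $p=\langle x,b\rangle$, $q=\langle y,b\rangle$ and $S=|p|^2+|q|^2$, and note that $\langle x,b\rangle\langle b,y\rangle=p\bar q$. Parametrize $\lambda=\sin s$ with $s\in[-\pi/2,\pi/2]$, so that $\sqrt{1-\lambda^2}=\cos s$. Expanding and using double-angle formulas gives
\[
|p\cos s+q\sin s|^2=\frac{S}{2}+\frac{|p|^2-|q|^2}{2}\cos 2s+\operatorname{Re}(p\bar q)\sin 2s .
\]
Apply Lemma~\ref{lem:harmonic_max} in the variable $t=2s$, with $A=\frac{|p|^2-|q|^2}{2}$ and $B=\operatorname{Re}(p\bar q)$. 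We have
\[
(|p|^2-|q|^2)^2+4(\operatorname{Re}p\bar q)^2=S^2-4|p|^2|q|^2+4(\operatorname{Re}p\bar q)^2=S^2-4(\operatorname{Im}p\bar q)^2=\Delta^2,
\]
so $\sqrt{A^2+B^2}=\Delta/2$. This also shows that $\Delta$ is well defined. Hence the maximum value of the squared denominator is $(S+\Delta)/2$. By the lemma it is attained at $t_0$ with $\cos t_0=(|p|^2-|q|^2)/\Delta$ and $\sin t_0=2\operatorname{Re}(p\bar q)/\Delta$; these are well defined because $\Delta>0$.

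The main point to be careful about is the range of $s$. The lemma allows $t_0$ to be chosen in $(-\pi,\pi]$, so $s_0=t_0/2$ lies in $(-\pi/2,\pi/2]$. Therefore $\lambda_0=\sin s_0\in[-1,1]$ is admissible and $\sqrt{1-\lambda_0^2}=\cos s_0\ge 0$, consistent with the parametrization. Then
\[
|\lambda_0|=|\sin s_0|=\sqrt{\frac{1-\cos t_0}{2}}=\sqrt{\frac{\Delta-|p|^2+|q|^2}{2\Delta}}=\delta .
\]
The denominator at $\lambda_0$ equals $\sqrt{(S+\Delta)/2}$, which is positive because $\Delta>0$.

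Finally, substitute $\lambda_0$ into \eqref{eq:family_bound}, which is just Lemma~\ref{lem:param} divided by the positive denominator. This gives
\[
|\langle a,y\rangle|\le\frac{\|x\|^2+\|y\|^2}{2}\cdot\frac{\|a\|\|b\|+\delta|\langle a,b\rangle|}{\sqrt{(S+\Delta)/2}} .
\]
Since $\frac{1}{2}\big/\sqrt{(S+\Delta)/2}=1/\sqrt{2(S+\Delta)}$, this is exactly \eqref{eq:projection_bound}.
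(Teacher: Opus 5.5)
Your proposal is correct and follows essentially the same route as the paper. Both parametrize $\lambda=\sin s$, rewrite the squared denominator as $\frac{S}{2}+A\cos 2s+B\sin 2s$, and apply Lemma~\ref{lem:harmonic_max} to obtain the maximum $(S+\Delta)/2$ at a point $s_0\in(-\pi/2,\pi/2]$ with $|\lambda_0|=\delta$. The only cosmetic difference is that you substitute $\lambda_0$ into the divided form \eqref{eq:family_bound}, whereas the paper uses the squared inequality and then takes square roots.
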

		
		\begin{proof}
			By squaring both sides of inequality \eqref{eq:main_cor}, we obtain
			\begin{equation}\label{eq:squared_cor}
				|\langle a, y\rangle|^2\, |\langle \sqrt{1-\lambda^2}x+\lambda y, b\rangle|^2 \leq \frac{(\|x\|^2+\|y\|^2)^2}{4} \Big(\|a\| \|b\|+ |\lambda|\,|\langle a, b\rangle|\Big)^2 .
			\end{equation}
			Parameterize $\lambda \in [-1, 1]$ by $\lambda = \sin t$, so that $\sqrt{1-\lambda^2} = \cos t$ for $t \in [-\tfrac{\pi}{2}, \tfrac{\pi}{2}]$. Writing $X = \langle x,b\rangle$ and $Y = \langle y,b\rangle$, the second factor on the left-hand side becomes
			\[
			f(t) := |X \cos t + Y \sin t|^2 = |X|^2 \cos^2 t + |Y|^2 \sin^2 t + 2\operatorname{Re}(X \overline{Y}) \sin t \cos t .
			\]
			Using the double-angle identities $2\cos^2 t = 1+\cos 2t$, $2\sin^2 t = 1-\cos 2t$ and $2\sin t \cos t = \sin 2t$, this reads
			\[
			f(t) = \frac{|X|^2+|Y|^2}{2} + \frac{|X|^2-|Y|^2}{2} \cos 2t + \operatorname{Re}(X \overline{Y}) \sin 2t .
			\]
			By Lemma~\ref{lem:harmonic_max}, applied with $s=2t$, $A = \tfrac{|X|^2-|Y|^2}{2}$ and $B = \operatorname{Re}(X\overline{Y})$, the maximum over $t$ of the oscillatory part is $\sqrt{A^2+B^2}$, where
			\[
			A^2 + B^2 = \frac{(|X|^2-|Y|^2)^2 + 4\bigl(\operatorname{Re}(X \overline{Y})\bigr)^2}{4} = \frac{(|X|^2+|Y|^2)^2 - 4\bigl(\operatorname{Im}(X \overline{Y})\bigr)^2}{4} = \frac{\Delta^2}{4} ,
			\]
			the middle equality following from $|X|^2|Y|^2 = (\operatorname{Re} X\overline{Y})^2 + (\operatorname{Im} X\overline{Y})^2$. Hence
			\[
			\max_{t} f(t) = \frac{|X|^2+|Y|^2}{2} + \frac{\Delta}{2} = \frac{|\langle x,b\rangle|^2+|\langle y,b\rangle|^2+\Delta}{2} .
			\]
			Since $\Delta > 0$ by assumption, the amplitude $\sqrt{A^2+B^2} = \Delta/2$ is nonzero, so by Lemma~\ref{lem:harmonic_max} there exists $s_0\in(-\pi,\pi]$ with $\cos s_0 = A/(\Delta/2) = (|X|^2-|Y|^2)/\Delta$ at which the maximum of $A\cos s+B\sin s$ is attained. Setting $t_0=s_0/2\in\left(-\tfrac{\pi}{2}, \tfrac{\pi}{2}\right]$, the maximum of $f$ is attained at $t_0$, and $\cos2t_0=(|X|^2-|Y|^2)/\Delta$. The corresponding parameter is $\lambda_0 = \sin t_0$, and
			\[
			\lambda_0^2 = \sin^2 t_0 = \frac{1 - \cos 2t_0}{2} = \frac{\Delta - |\langle x,b\rangle|^2 + |\langle y,b\rangle|^2}{2\Delta} = \delta^2 ,
			\]
						so $|\lambda_0| = \delta$. Substituting $\lambda=\lambda_0$ into \eqref{eq:squared_cor} and using $f(t_0)=\max_{t}f(t)$ together with $|\lambda_0|=\delta$, we obtain
			\[
			|\langle a, y\rangle|^2 \cdot \frac{|\langle x,b\rangle|^2+|\langle y,b\rangle|^2+\Delta}{2} \leq \frac{(\|x\|^2+\|y\|^2)^2}{4} \Big(\|a\|\|b\| + \delta\,|\langle a,b\rangle|\Big)^2 .
			\]
			Taking square roots and dividing by the positive number
			\[
			\sqrt{\frac{|\langle x,b\rangle|^2+|\langle y,b\rangle|^2+\Delta}{2}}
			\]
			yields \eqref{eq:projection_bound}.
		\end{proof}
		
				\begin{remark}
						The condition $\Delta = 0$ is excluded from Corollary \ref{cor:projection_bound} as it corresponds to a degenerate configuration. Indeed, assume that $\Delta=0$, that is,
			\[
			\left(|\langle x,b\rangle|^2+|\langle y,b\rangle|^2\right)^2=4\left(\operatorname{Im}(\langle x,b\rangle\langle b,y\rangle)\right)^2.
			\]
			Since $\left|\operatorname{Im}(\langle x,b\rangle\langle b,y\rangle)\right|\leq\left|\langle x,b\rangle\langle b,y\rangle\right|=|\langle x,b\rangle||\langle y,b\rangle|$, it follows that
			\[
			\left(|\langle x,b\rangle|^2+|\langle y,b\rangle|^2\right)^2\leq4|\langle x,b\rangle|^2|\langle y,b\rangle|^2,
			\]
			that is, $\left(|\langle x,b\rangle|^2-|\langle y,b\rangle|^2\right)^2\leq0$. Hence $|\langle x,b\rangle|=|\langle y,b\rangle|=:m$, and the assumption $\Delta=0$ becomes
			\[
			4m^4=4\left(\operatorname{Im}(\langle x,b\rangle\langle b,y\rangle)\right)^2 ,
			\]
			i.e.,
			\[
			m^4=\left(\operatorname{Im}(\langle x,b\rangle\langle b,y\rangle)\right)^2 .
			\]
			On the other hand,
			\[
			\left(\operatorname{Re}(\langle x,b\rangle\langle b,y\rangle)\right)^2+\left(\operatorname{Im}(\langle x,b\rangle\langle b,y\rangle)\right)^2=\left|\langle x,b\rangle\langle b,y\rangle\right|^2=m^4 ,
			\]
			which gives $\operatorname{Re}(\langle x,b\rangle\langle b,y\rangle)=0$. 			If $\langle y,b\rangle=0$, then also $\langle x,b\rangle=0$. If $\langle y,b\rangle\neq0$, write $\langle x,b\rangle=c\,\langle y,b\rangle$, where $c=\langle x,b\rangle/\langle y,b\rangle\in\mathbb{C}$. Then
			\[
			|c|=\frac{|\langle x,b\rangle|}{|\langle y,b\rangle|}=1
			\qquad\text{and}\qquad
			\langle x,b\rangle\langle b,y\rangle=c\,\langle y,b\rangle\overline{\langle y,b\rangle}=c\,|\langle y,b\rangle|^2,
			\]
			so that
			\[
			0=\operatorname{Re}(\langle x,b\rangle\langle b,y\rangle)=\operatorname{Re}(c)\,|\langle y,b\rangle|^2,
			\]
			and hence $\operatorname{Re}(c)=0$. Thus, $c=\pm i$, and therefore $\langle x, b \rangle = \pm i \langle y, b \rangle$ in both cases.
			
			 In this case the amplitude of the oscillatory part of $f$ vanishes, so $f$ is constant, that is, $|\langle \sqrt{1-\lambda^2}x + \lambda y, b\rangle|=|\langle x,b\rangle|$ for every $\lambda\in[-1,1]$. The left-hand side of \eqref{eq:main_cor} is therefore the same for all $\lambda$, while the right-hand side is non-decreasing in $|\lambda|$. Consequently, the inequality corresponding to $\lambda=0$ implies all the others, and it reads
			\[
			|\langle a, y \rangle|\, |\langle x, b \rangle| \leq \frac{\|x\|^2+\|y\|^2}{2} \|a\|\|b\| ,
			\]
			which is also an immediate consequence of the Cauchy--Schwarz and AM--GM inequalities. The content of Corollary~\ref{cor:projection_bound} therefore lies in the regime $\Delta > 0$.
		\end{remark}
		
				For non-zero vectors $u,v\in\mathcal{H}$, the cosine of the angle between $u$ and $v$ is defined as
		\[
		\cos\angle(u,v)=\frac{|\langle u,v\rangle|}{\|u\|\|v\|}\in[0,1].
		\]
		The following result, stated in terms of these cosines, describes precisely when the family \eqref{eq:family_bound} can improve on the Cauchy--Schwarz inequality, and how to choose $x$ in that case.
		
		\begin{proposition}\label{prop:regime}
			Let $a,b,y\in\mathcal{H}$ be non-zero vectors.
			\begin{enumerate}[\rm($i$)]
				\item If $\cos\angle(a,b)\geq\cos\angle(y,b)$, then the right-hand side of \eqref{eq:family_bound} is at least $\|a\|\|y\|$ for every $x\in\mathcal{H}$ and every $\lambda\in[-1,1]$ such that $\langle\sqrt{1-\lambda^2}x+\lambda y,b\rangle\neq0$.
				\item If $\cos\angle(a,b)<\cos\angle(y,b)$, let $x=\frac{\|y\|}{\|b\|}\,\theta b$, where $\theta\in\mathbb{C}$, $|\theta|=1$, is chosen so that $\langle x,b\rangle\langle b,y\rangle\geq0$. Then the number $\mu$ from Theorem~\ref{thm:optimal} satisfies $$\|y\|\mu<\|a\|,$$ so that \eqref{eq:optimal_bound} is strictly sharper than the Cauchy--Schwarz inequality.
			\end{enumerate}
		\end{proposition}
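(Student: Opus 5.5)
For part ($i$), I will bound the quotient directly with Cauchy--Schwarz applied to the denominator. Put $z=\sqrt{1-\lambda^2}x+\lambda y$, so that
\[
|\langle z,b\rangle|\le \sqrt{1-\lambda^2}\,|\langle x,b\rangle|+|\lambda||\langle y,b\rangle|\le \|b\|\left(\sqrt{1-\lambda^2}\,\|x\|+|\lambda|\,c_y\|y\|\right),
\]
where $c_y=\cos\angle(y,b)$ and $c_a=\cos\angle(a,b)$. The right-hand side of \eqref{eq:family_bound} then satisfies
\[
\frac{\|x\|^2+\|y\|^2}{2}\cdot\frac{\|a\|\|b\|(1+|\lambda| c_a)}{|\langle z,b\rangle|}\ \ge\ \|a\|\,\frac{(\|x\|^2+\|y\|^2)(1+|\lambda|c_a)}{2\left(\sqrt{1-\lambda^2}\|x\|+|\lambda| c_y\|y\|\right)} .
\]
It therefore suffices to prove
\[
(\|x\|^2+\|y\|^2)(1+sc_a)\ge 2\|y\|\left(\sqrt{1-s^2}\|x\|+sc_y\|y\|\right),\qquad s=|\lambda|.
\]
Since $c_a\ge c_y$, it is enough to check this with $c_a$ replaced by $c_y=:c$. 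By AM--GM, $(\|x\|^2+\|y\|^2)\ge 2\|x\|\|y\|$, which handles the term $2\sqrt{1-s^2}\|x\|\|y\|$ with $\sqrt{1-s^2}\le 1$. The remaining term needs $(\|x\|^2+\|y\|^2)sc\ge 2sc\|y\|^2$, which fails when $\|x\|<\|y\|$. So I will instead combine the two terms as a quadratic form in $(\|x\|,\|y\|)$:
\[
(1+sc)u^2-2\sqrt{1-s^2}\,uv+(1+sc-2sc)v^2=(1+sc)u^2-2\sqrt{1-s^2}\,uv+(1-sc)v^2 .
\]
This form is positive semidefinite, because its discriminant condition reads $(1-s^2)\le(1+sc)(1-sc)=1-s^2c^2$, which holds since $c\le1$. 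That proves ($i$).

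For part ($ii$), with the prescribed $x$ we have $\|x\|=\|y\|$, $X=|\langle x,b\rangle|=\|y\|\|b\|$, and $Y=|\langle y,b\rangle|=c_y\|y\|\|b\|$. The product $\langle x,b\rangle\langle b,y\rangle$ is real and non-negative, so Theorem~\ref{thm:optimal} applies. Write $\alpha=\|a\|\|b\|$ and $\beta=c_a\alpha$. I will first determine which case of Theorem~\ref{thm:optimal} occurs by comparing $\beta X$ with $\alpha Y$, that is, $c_a$ with $c_y$. The hypothesis $c_a<c_y$ places us in the second case. Substituting and factoring out $\|y\|\|b\|$ gives
\[
\|y\|\mu=\|a\|\,\frac{c_ac_y+\sqrt{1-c_a^2+c_y^2}}{1+c_y^2}.
\]
It remains to show that $c_ac_y+\sqrt{1-c_a^2+c_y^2}<1+c_y^2$. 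Equivalently, $\sqrt{1-c_a^2+c_y^2}<1+c_y^2-c_ac_y$, where the right-hand side is positive. Squaring and simplifying, I expect this to reduce to $(c_y-c_a)^2(\text{positive})>0$, or to something like $c_y^2(c_y-c_a)^2+\dots>0$. The strictness then comes from $c_a\ne c_y$.

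The main obstacle is this final algebraic verification in ($ii$), and to a lesser degree choosing the right quadratic-form argument in ($i$). In ($ii$), I will expand $(1+c_y^2-c_ac_y)^2-(1-c_a^2+c_y^2)$ and aim to factor it as $(c_y-c_a)^2+c_y^2(c_y-c_a)^2$, or as something close to that, which is strictly positive.
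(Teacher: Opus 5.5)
Your proposal is correct and is essentially the paper's proof. In ($i$), the paper uses $c_y\le c_a$ to enlarge the term $2sc_y\|y\|^2$ on the right rather than shrinking the factor $1+sc_a$ on the left; this gives the same quadratic with $c_a$ in place of $c_y$ and the same discriminant check. In ($ii$), the expansion you anticipate comes out exactly as you guessed, $(1+c_y^2-c_ac_y)^2-(1-c_a^2+c_y^2)=(c_y^2-c_ac_y)^2+(c_y-c_a)^2=(1+c_y^2)(c_y-c_a)^2>0$, which is precisely the identity the paper verifies after normalizing $\|a\|=\|b\|=1$.
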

		
		\begin{proof}			Replacing $a$ by $ra$ and $b$ by $\sigma b$ with $r,\sigma>0$ multiplies both sides of \eqref{eq:family_bound} and of \eqref{eq:optimal_bound}, as well as the quantity $\|a\|\|y\|$, by $r$, and leaves the cosines $\cos\angle(a,b)$ and $\cos\angle(y,b)$ unchanged. Hence, we may assume that $\|a\|=\|b\|=1$. Put $\beta=|\langle a,b\rangle|$ and $c=|\langle y,b\rangle|/\|y\|$, so that $0\leq\beta\leq1$ and $0\leq c\leq1$. The assumption in ($i$) reads $\beta\geq c$, while the one in ($ii$) reads $\beta<c$.
			
			We first prove ($i$). Let $x\in\mathcal{H}$ and $\lambda\in[-1,1]$ be such that $\langle\sqrt{1-\lambda^2}x+\lambda y,b\rangle\neq0$, and put $s=|\lambda|$ and $t=\|x\|/\|y\|$. By the triangle and the Cauchy--Schwarz inequalities,
			\[
			\left|\langle\sqrt{1-\lambda^2}x+\lambda y,b\rangle\right|\leq\sqrt{1-s^2}\,\|x\|+s|\langle y,b\rangle|=\|y\|\left(\sqrt{1-s^2}\,t+sc\right),
			\]
						and consequently, since $\|x\|^2+\|y\|^2=\|y\|^2(t^2+1)$, the right-hand side of \eqref{eq:family_bound} is at least
			\[
			\frac{\|x\|^2+\|y\|^2}{2}\cdot\frac{1+s\beta}{\|y\|\left(\sqrt{1-s^2}\,t+sc\right)}=\|y\|\cdot\frac{(t^2+1)(1+s\beta)}{2\left(\sqrt{1-s^2}\,t+sc\right)}.
			\]
						Since $\|a\|\|y\|=\|y\|$, it suffices to show that
			\[
			(t^2+1)(1+s\beta)\geq2\left(\sqrt{1-s^2}\,t+sc\right).
			\]
			As $c\leq\beta$ and $s\geq0$, we have $2sc\leq2s\beta$, so it is enough to prove that
			\[
			(t^2+1)(1+s\beta)\geq2\sqrt{1-s^2}\,t+2s\beta ,
			\]
			that is, after expanding the left-hand side and rearranging,
			\[
			(1+s\beta)\,t^2-2\sqrt{1-s^2}\,t+(1-s\beta)\geq0 .
			\]
			The left-hand side is a quadratic polynomial in $t$ with positive leading coefficient, whose discriminant equals
			\[
			4(1-s^2)-4(1+s\beta)(1-s\beta)=-4s^2(1-\beta^2)\leq0 .
			\]
			Hence it is non-negative for every real $t$, which proves ($i$).
			
						We now prove ($ii$). Since $c>\beta\geq0$, we have $c>0$, hence $\langle y,b\rangle\neq0$. Recalling that $\|b\|=1$, we have $x=\|y\|\theta b$ and
			\[
			\langle x,b\rangle\langle b,y\rangle=\|y\|\,\theta\,\langle b,y\rangle ,
			\]
			so the choice $\theta=\overline{\langle b,y\rangle}/|\langle b,y\rangle|$ satisfies $|\theta|=1$ and gives $$\langle x,b\rangle\langle b,y\rangle=\|y\|\,|\langle b,y\rangle|>0.$$ In particular, $\langle x,b\rangle\langle b,y\rangle$ is real and $\langle x,b\rangle=\|y\|\theta\neq0$, so the hypotheses of Theorem~\ref{thm:optimal} are satisfied. In the notation of the proof of Theorem~\ref{thm:optimal} we have $\alpha=1$, $X=|\langle x,b\rangle|=\|y\|$ and $Y=|\langle y,b\rangle|=c\|y\|$. Thus $$\beta X=\beta\|y\|<c\|y\|=\alpha Y,$$ so the second case of Theorem~\ref{thm:optimal} applies and
			\[
			\mu=\frac{\beta c\|y\|+\sqrt{(1-\beta^2)\|y\|^2+c^2\|y\|^2}}{\|y\|^2(1+c^2)}=\frac{\beta c+\sqrt{1-\beta^2+c^2}}{\|y\|(1+c^2)} .
			\]
			Since $\|x\|=\|y\|$, the right-hand side of \eqref{eq:optimal_bound} equals $\|y\|^2\mu$, and the inequality $\|y\|^2\mu<\|a\|\|y\|=\|y\|$ is equivalent to
			\[
			\sqrt{1-\beta^2+c^2}<1+c^2-\beta c .
			\]
			The right-hand side here is positive, because $\beta c\leq c\leq1+c^2$. Squaring, we obtain the equivalent inequality $1-\beta^2+c^2<(1+c^2-\beta c)^2$, and a direct expansion gives
			\[
			(1+c^2-\beta c)^2-(1-\beta^2+c^2)=c^4-2\beta c^3+(1+\beta^2)c^2-2\beta c+\beta^2=(c^2-\beta c)^2+(c-\beta)^2 ,
			\]
			which is positive because $c\neq\beta$. This proves ($ii$).
		\end{proof}
		
				\begin{remark}\label{rem:orthogonal}
						In the case $a\perp b$ the choice of $x$ along $b$ can be optimized completely. Assume that $\|b\|=1$ and $y\not\perp b$, put $Y=|\langle y,b\rangle|>0$, and consider $x=t\theta b$ with $t\geq0$ and $\theta$ as in Proposition~\ref{prop:regime}. Since $\langle a,b\rangle=0$, the second case of Theorem~\ref{thm:optimal} applies, and $\|x\|=|\langle x,b\rangle|=t$, so that
			\[
			\mu=\frac{\sqrt{\|a\|^2t^2+\|a\|^2Y^2}}{t^2+Y^2}=\frac{\|a\|}{\sqrt{t^2+Y^2}} .
			\]
			Hence the right-hand side of \eqref{eq:optimal_bound} equals
			\[
			\frac{t^2+\|y\|^2}{2}\cdot\frac{\|a\|}{\sqrt{t^2+Y^2}}=\frac{\|a\|}{2}\left(\sqrt{t^2+Y^2}+\frac{\|y\|^2-Y^2}{\sqrt{t^2+Y^2}}\right)\geq\|a\|\sqrt{\|y\|^2-Y^2}
			\]
			by the AM--GM inequality, with equality precisely when $t=\sqrt{\|y\|^2-2Y^2}$. Such a $t\geq0$ exists exactly when $Y\leq\|y\|/\sqrt2$, and in that case \eqref{eq:optimal_bound} becomes
			\[
			|\langle a,y\rangle|\leq\|a\|\sqrt{\|y\|^2-Y^2}.
			\]
								This estimate cannot be improved: for any $\gamma>0$ and any $\eta,Y$ with $0<Y\leq\eta$ there exist vectors $a,y$ with $\|a\|=\gamma$, $\|y\|=\eta$, $|\langle y,b\rangle|=Y$ and $a\perp b$ for which equality holds. Indeed, take a unit vector $w\perp b$ and put $y=Yb+\sqrt{\eta^2-Y^2}\,w$ and $a=\gamma w$; then $|\langle a,y\rangle|=\gamma\sqrt{\eta^2-Y^2}$.
		\end{remark}
		
		The following two examples illustrate Proposition~\ref{prop:regime} and Remark~\ref{rem:orthogonal}. In the first one, $a\perp b$ and $x$ is the vector from Remark~\ref{rem:orthogonal}, so that the resulting bound is exact. In the second one, $\langle a,b\rangle\neq0$, the vector $x$ is the one from Proposition~\ref{prop:regime}($ii$), and the resulting bounds are strictly sharper than both the Cauchy--Schwarz and the Buzano inequality. All quantities are computed explicitly, so that the bounds can be compared with the exact value of $|\langle a,y\rangle|$.

		\begin{example}\label{ex:orthogonal}
			Let $\mathcal{H} = \mathbb{C}^2$ with the standard inner product, and consider the real vectors
			\[
			a = \begin{bmatrix} 1 \\ 0 \end{bmatrix}, \quad b = \begin{bmatrix} 0 \\ 1 \end{bmatrix}, \quad x = \begin{bmatrix} 0 \\ 1 \end{bmatrix}, \quad y = \begin{bmatrix} \sqrt{2} \\ 1 \end{bmatrix}.
			\]
			The relevant quantities are
			\[
			\|a\| = \|b\| = 1, \quad \|x\|^2 = 1, \quad \|y\|^2 = 3,
			\]
			\[
			\langle a, b \rangle = 0, \quad \langle a, y \rangle = \sqrt{2}, \quad \langle x, b \rangle = \langle y, b \rangle = 1 .
			\]
			Since $\langle a,b\rangle=0$, the second case of Theorem~\ref{thm:optimal} applies, and
			\[
			\mu=\frac{0+\sqrt{(1-0)\cdot 1+1\cdot1\cdot 1}}{1+1}=\frac{\sqrt2}{2}.
			\]
			Hence \eqref{eq:optimal_bound} gives $$|\langle a,y\rangle|\leq \frac{1+3}{2}\cdot\frac{\sqrt2}{2}=\sqrt2,$$ which is exact, since $|\langle a,y\rangle|=\sqrt2$. Corollary~\ref{cor:projection_bound} gives the same bound in this case. Indeed,
			\[
			\Delta = \sqrt{(1 + 1)^2 - 0} = 2 > 0, \qquad \delta = \sqrt{\frac{2 - 1 + 1}{2\cdot 2}} = \frac{1}{\sqrt{2}} ,
			\]
			and \eqref{eq:projection_bound} gives
			\[
			|\langle a,y\rangle|\leq\frac{(1+3)\left(1\cdot1+\frac{1}{\sqrt2}\cdot0\right)}{\sqrt{2(1+1+2)}}=\frac{4}{\sqrt8}=\sqrt2,
			\]
			so equality holds in \eqref{eq:projection_bound} as well.
			
			It is instructive to compare with the classical estimates. Cauchy--Schwarz gives only $$|\langle a,y\rangle| \leq \|a\|\,\|y\| = \sqrt{3} \approx 1.732.$$ The classical Buzano inequality \eqref{eq:buzano_ineq} applied to $a, y, b$ gives $$|\langle a,y\rangle\langle y,b\rangle| \leq \tfrac{1}{2}(\|a\|\|b\| + |\langle a,b\rangle|)\|y\|^2 = \tfrac{3}{2}.$$ Since $\langle y,b\rangle = 1$, this restricts $|\langle a,y\rangle| \leq 1.5$. Both are strictly weaker than the sharp value $\sqrt{2}$ furnished here, so the parametrized inequality strictly improves on both Cauchy--Schwarz and Buzano in this configuration.
		\end{example}
		
		\begin{example}\label{ex:active}
			The improvement is not confined to the case $\langle a,b\rangle=0$. Take, again in $\mathbb{C}^2$, the real vectors
			\[
			a = \begin{bmatrix} -2 \\ -2 \end{bmatrix}, \quad b = \begin{bmatrix} 1 \\ -2 \end{bmatrix}, \quad x = \begin{bmatrix} -1 \\ 2 \end{bmatrix}, \quad y = \begin{bmatrix} 1 \\ 2 \end{bmatrix} .
			\]
			Then
			\[
			\|a\|^2 = 8, \quad \|b\|^2 = 5, \quad \|x\|^2 = \|y\|^2 = 5, \quad \langle a,b\rangle = 2 \neq 0,
			\]
			\[
			\langle a,y\rangle = -6, \quad \langle x,b\rangle = -5, \quad \langle y,b\rangle = -3 .
			\]
			Since $|\langle a,b\rangle||\langle x,b\rangle|=10\leq\|a\|\|b\||\langle y,b\rangle|=3\sqrt{40}$, the second case of Theorem~\ref{thm:optimal} applies, and
			\[
			\mu=\frac{2\cdot 3+\sqrt{(40-4)\cdot 25+40\cdot 9}}{25+9}=\frac{6+\sqrt{1260}}{34}=\frac{3+3\sqrt{35}}{17}.
			\]
			Hence \eqref{eq:optimal_bound} gives
			\[
			|\langle a,y\rangle|\leq \frac{5+5}{2}\cdot\frac{3+3\sqrt{35}}{17}=\frac{15\left(1+\sqrt{35}\right)}{17}\approx 6.102,
			\]
			the minimum in Theorem~\ref{thm:optimal} being attained for $\lambda_0=(3\mu-2)/\sqrt{40}\approx 0.263$ (see the proof of Theorem~\ref{thm:optimal}). On the other hand, since all the inner products involved are real, $\Delta = |\langle x,b\rangle|^2 + |\langle y,b\rangle|^2 = 25 + 9 = 34 > 0$, and
			\[
			\delta = \sqrt{\frac{\Delta - |\langle x,b\rangle|^2 + |\langle y,b\rangle|^2}{2\Delta}} = \sqrt{\frac{34 - 25 + 9}{68}} = \sqrt{\frac{18}{68}} = \frac{3}{\sqrt{34}} ,
			\]
			so that \eqref{eq:projection_bound} gives
			\[
			|\langle a,y\rangle| \leq \frac{(5+5)\left(\sqrt{40} + \frac{3}{\sqrt{34}}\cdot 2\right)}{\sqrt{2\left(25+9+34\right)}}
			= 5\cdot\frac{\sqrt{40} + \frac{6}{\sqrt{34}}}{\sqrt{34}} \approx 6.306.
			\] The exact value is $|\langle a,y\rangle| = 6$, so both bounds are close and, more importantly, sharper than both classical estimates: Cauchy--Schwarz gives $$|\langle a,y\rangle| \leq \|a\|\|y\| = \sqrt{8}\sqrt{5} = \sqrt{40} \approx 6.32,$$ while Buzano applied to $a,y,b$ gives
			\[
			|\langle a,y\rangle| \leq \frac{(\|a\|\|b\| + |\langle a,b\rangle|)\|y\|^2}{2|\langle y,b\rangle|} = \frac{(\sqrt{40}+2)\cdot 5}{2\cdot 3} \approx 6.937 .
			\]
			Thus $6.102 < 6.306 < 6.325 < 6.937$: both bounds outperform Cauchy--Schwarz and Buzano even when $\langle a,b\rangle \neq 0$.
		\end{example}

				\bigskip
		\section*{Declarations}
		\noindent{\bf{Funding}}\\
		This work has been supported by the Ministry of Science, Technological Development and Innovation of the Republic of Serbia [Grant Number: 451-03-34/2026-03/200102].
		
		\vspace{0.5cm}
		
		\noindent{\bf{Availability of data and materials}}\\
		\noindent No data were used to support this study.
		\vspace{0.5cm}\\
		\noindent{\bf{Competing interests}}\\
		\noindent The author declares that he has no competing interests.
		\vspace{0.5cm}\\
		\noindent{\bf{Declaration of generative AI and AI-assisted technologies in the manuscript preparation process}}\\
		\noindent During the preparation of this work, the author used Claude (Anthropic) to verify the mathematical arguments, to assist in the derivation and drafting of parts of the results, and to improve the exposition; ChatGPT (OpenAI) and Gemini (Google) were additionally consulted for pre-submission checks. The author reviewed, verified and edited all generated content and takes full responsibility for the content of the published article.
		
		
		\vspace{0.5cm} 
		
	\end{document}